\newcommand{\inverse}{{-1}}
\newcommand{\subjectTo}{\text {s.t. }}
\newcommand{\SolSet}{\mathcal{S}^*}
\newcommand{\Real}{\mathbb{R}}
\newcommand{\Curve}{\mathcal{C}}
\newcommand{\Neighborhood}{\mathcal{N}}
\providecommand{\norm}[1]{\left\|#1\right\|}
\newtheorem{assumption}{Assumption}[section]
\newtheorem{theorem}{Theorem}[section]
\newtheorem{lemma}{Lemma}[section]
\newtheorem{proposition}{Proposition}[section]
\title{An inexact infeasible arc-search interior-point method for linear optimization problems}
\date{2026/01/07}
\author{
  Einosuke Iida\thanks{Department of Mathematical and Computing Science, Institute of Science Tokyo} and
  Makoto Yamashita\thanks{
    Department of Mathematical and Computing Science, Institute of Science Tokyo.
  }
}
\begin{document}

\maketitle

\begin{abstract}
  We propose an inexact infeasible arc-search interior-point method for solving linear optimization problems.
  The method combines an arc-search strategy with inexact solutions to Newton systems and admits a polynomial iteration complexity bound.

  In existing inexact infeasible interior-point methods,
  both the linearization error of the central path and the inexactness of the Newton system accumulate along the search direction,
  which forces the algorithm to take very small steps.
  The proposed method mitigates this effect by using an arc-search strategy:
  the curved search path provides a more accurate approximation of the central path,
  so the step size can remain larger even when the Newton system is solved inexactly.
  As a result,
  the proposed method achieves a provably tighter worst-case iteration bound than existing inexact infeasible line-search methods.

  Numerical experiments on NETLIB benchmark problems demonstrate that the proposed method reduces both the number of iterations and the computation time.
\end{abstract}

{\bf Keywords:}
Interior-Point Method, IPM, Arc-Search, Inexact IPM, Infeasible IPM, Linear Optimization Problems

\section{Introduction}
Linear optimization problems (LOPs) have had an important role in both theoretical analysis and practical applications, and
many methods have been studied for solving LOPs efficiently.
The first interior-point method (IPM) was proposed by Dikin~\cite{dikin1967iterative},
followed by the first polynomial-time IPM introduced by Karmarkar~\cite{karmarkar1984new}.
The use of IPMs has since been extended to other optimization problems,
for example, second-order cone optimization and semidefinite optimization~\cite{nesterov1994interior}.

Various types of IPMs have been studied.
The IPM with a feasible initial point, called feasible IPM,
was proved to be a polynomial algorithm~\cite{gonzaga1990polynomial,kojima1989primal,mizuno1993adaptive,renegar1988polynomial}
and its best iteration complexity is $\order{\sqrt{n} L}$,
where $n$ is the number of variables and $L$ is the binary length of the input data.
The IPM with the infeasible initial point, called infeasible IPM,
was also proved to have global convergence~\cite{kojima1993primal} and polynomial time complexity~\cite{zhang1994convergence}.
Predictor-corrector algorithms were studied in \cite{Mehrotra1992,lustig1992implementing},
and a two-dimensional search IPM was recently proposed by \citet{vitor2022projected}.

An inexact IPM is one such variant.
This inexactly solves a system of linear equations for the search direction at each iteration.
Inexact IPMs generally perform better on large-scale problems than their exact counterparts,
since they reduce the computational cost per iteration by approximately solving the Newton systems.
An infeasible inexact IPM (II-IPM) was initially proposed by \citet{bellavia1998inexact} for solving constrained systems of equations
and has since been extended to LOPs~\cite{freund1999convergence,kojima1993primal,mizuno1999global}.

Several studies~\cite{freund1999convergence,mizuno1999global,korzak2000convergence} proved the convergence of II-IPMs.
In particular, an II-IPM of \citet{mizuno1999global} has $\order{n^2 L}$ iteration complexity.
In \cite{al2009convergence, monteiro2003convergence},
preconditioned conjugate gradient methods were used
as the inexact linear system solver.
The inexact IPMs have recently gained much attention due to their relevance to quantum computing.
Quantum linear system algorithms (QLSAs) have the potential to solve systems of linear equations quickly.
Inexact IPM using the QLSA, called QIPM, is proposed in several papers \cite{kerenidis2020quantum,wu2023inexact,mohammadisiahroudi2024efficient,mohammadisiahroudi2025inexact}.

One main difficulty in designing inexact infeasible IPMs is that the step size becomes severely restricted when the Newton direction is computed inexactly.
In existing methods,
the search direction is based on a linearization of the central path,
and the step size must take into account both the linearization error of the central-path equation and the error introduced by solving the Newton system inexactly.
These errors accumulate along a straight search direction,
forcing the step size to be very small and resulting in relatively loose worst-case iteration bounds $\order{n^2 L}$.

On the other hand,
studies to reduce the number of iterations in IPMs have also contributed to improving the numerical performance.
Higher-order IPMs using second-order or higher derivatives have been studied~\cite{monteiro1990polynomial,Mehrotra1992,gondzio1996multiple,lustig1992implementing,espaas2022interior},
but these sometimes have a worse polynomial bound,
or the analysis of computational complexity is not simple.
An arc-search IPM proposed by Yang~\cite{yang2011polynomial} is one of the higher-order IPMs.
(IPMs that find the next iteration point on a straight line are called line-search IPMs in this paper.)
Arc-search IPMs employ an ellipsoidal arc to find the next iteration point.
Since the central path toward an optimal solution is generally a smooth curve,
the ellipsoidal arc can approximate the central path better than the straight line,
and a reduction in the number of iterations can be expected.
\citet{yang2018arc} showed that
their arc-search IPM has the iteration complexity of $\order{n L}$,
the same as that of the line-search IPM~\cite{wright1997primal}.
In addition,
Yang~\cite{yang2018two} improved it to $\order{\sqrt{n} L}$ under the assumption that the variables $x^k$ and $s^k$ are bounded below and away from zeros for all iterations until a stopping criterion is satisfied.
Numerical experiments in \cite{Yang2017,yang2018arc} demonstrated that the number of iterations of an LOP was reduced compared to existing methods.
The arc-search IPMs have been applied to various optimization problems,
including second-order cone~\cite{yang2017arc}, semidefinite~\cite{zhang2019primal}, convex~\cite{yang2023polynomial}, and nonlinear optimization problems~\cite{Yamashita2021}.
In the work of Iida and Yamashita~\cite{iida2024infeasible},
an arc-search IPM with Nesterov's restarting strategy further reduced the number of iterations.

Arc-search IPMs update the iterate along a curve that more accurately approximates the central path.
Because the curved trajectory suppresses the linearization error,
the step size can remain substantially larger even when the Newton system is solved inexactly.
This property enables a sharper iteration complexity guarantee.

In this paper,
we propose a novel inexact infeasible arc-search interior-point method (II-arc).
To the best of the authors' knowledge,
this paper is the first paper that discusses an inexact IPM with the arc-search framework.
We show that the method is a polynomial-time algorithm and establish an improved iteration complexity bound of $\order{n ^{1.5} L}$,
tightening the best-known bound for inexact infeasible IPMs~\cite{mizuno1999global,mohammadisiahroudi2024efficient} by a factor of $n^{0.5}$.
We conducted numerical experiments using the conjugate gradient method~(CG) as an inexact linear equation solver.
The numerical results are consistent with the convergence analysis,
and the proposed method (II-arc) reduced the number of iterations by almost half for many instances.

This paper is organized as follows.
Section~\ref{section_preliminaries} introduces the standard form of LOPs and the formulas necessary for II-arc.
In Section~\ref{section_proposed_method},
we describe the proposed method,
and in Section~\ref{section_theoretical_proof},
we discuss the convergence and the polynomial iteration complexity.
Section~\ref{section_numerical_experiments} provides the results of the numerical experiments and the discussion.
Finally,
Section~\ref{section_conclusion} concludes the paper and discusses future directions.

\subsection{Notations}
We use $x_i$ to denote the $i$-th element of a vector $x$.
The Hadamard product of two vectors $u$ and $v$ is denoted by $u \circ v$.
The vector of all ones and the identity matrix are denoted by $e$ and $I$, respectively.
We use the capital character $X \in \Real^{n \times n}$ as the diagonal matrix whose diagonal elements are taken from the vector $x \in \Real^n$.
For set $B$,
we denote the cardinality of the set by $\abs*{B}$.
Given a matrix $A \in \Real^{m \times n}$ and a set $B \subseteq \{1, \ldots, n\}$,
the matrix $A_B$ is the submatrix consisting of the columns $\left\{ A_i: i \in B \right\}$.
Similarly,
given a vector $v \in \mathbb{R}^n$ and a set $B \subseteq\{1, \ldots, n\}$ where $\abs*{B} = m \le n$,
the matrix $V_B \in \Real^{m \times m}$ is the diagonal submatrix consisting of the elements $\left\{ v_i: i \in B \right\}$.
We use $\norm{x}_2 = (\sum_i x_i^2)^{1/2}$,
$\norm{x}_{\infty} = \max_i \abs*{x_i}$ and $\norm{x}_1 = \sum_i \abs*{x_i}$ for the Euclidean norm,
the infinity norm and the $\ell_1$ norm of a vector $x$,
respectively.
For simplicity,
we denote $\norm{x} = \norm{x}_2$.
For a matrix $A \in \Real^{m \times n}$, $\|A\|$ denotes the operator norm associated with the Euclidean norm;
$\|A\|=$ $\max_{\|z\|=1}\|A z\|$.
\section{Preliminaries}
\label{section_preliminaries}
In this paper, we consider an LOP of the standard form:
\begin{equation}
  \label{problem_main}
  \min_{x \in \Real^n} c^\top x, \quad \subjectTo A x = b, \quad x \ge 0,
\end{equation}
where $A \in \Real^{m \times n}$ with $m \le n$, $b \in \Real^m$, and $c \in \Real^n$
are input data.
The associated dual problem of \eqref{problem_main} is
\begin{equation}
  \label{problem_dual}
  \max_{y \in \Real^m, s \in \Real^n} b^\top y, \quad \subjectTo A^\top y+s=c, \quad s \ge 0,
\end{equation}
where $y$ and $s$ are the dual variable vector and the dual slack vector, respectively.
Let $\SolSet$ be the set of the optimal solutions to \eqref{problem_main} and \eqref{problem_dual}.
When $(x^*, y^*, s^*) \in \SolSet$,
it is well-known that $(x^*, y^*, s^*)$ satisfies
the KKT conditions:
\begin{subequations}
  \label{KKT_conditions}
  \begin{align}
    A x^*            & = b                        \\
    A^\top y^* + s^* & = c                        \\
    (x^*,s^*)        & \ge 0                      \\
    x_i^* s_i^*      & = 0, \quad i = 1,\ldots,n.
  \end{align}
\end{subequations}

We denote the primal and dual residuals in \eqref{problem_main} and \eqref{problem_dual} as
\begin{subequations}
  \label{residuals_constraints}
  \begin{align}
    r_b(x)    & = A x - b \label{residual_main}           \\
    r_c(y, s) & = A^\top y + s - c, \label{residual_dual}
  \end{align}
\end{subequations}
and define the duality measure as
\begin{equation}
  \mu = \frac{x^\top s}{n}.
  \label{def_mu}
\end{equation}

Letting $\zeta \ge 0$,
we define the set of $\zeta$-optimal solutions as
\begin{equation}
  \SolSet_\zeta = \left\{
  \left(x, y, s\right)\in \Real^{2n + m} \mid
  (x,s) \ge 0, \,
  \mu \le \zeta, \,
  \norm{(r_b(x), r_c(y,s))} \le \zeta
  \right\}.
  \label{def_zeta_optimal_SolSet}
\end{equation}
From the KKT conditions~\eqref{KKT_conditions},
we know $\SolSet \subset \SolSet_\zeta$.

In this paper,
we make the following assumptions for the primal-dual pair~\eqref{problem_main} and \eqref{problem_dual}.
These assumptions are common ones in the context of IPMs
and are used in many papers
(for example, see \cite{wright1997primal,yang2020arc}).
\begin{assumption}
  \label{assumption_opt_sol_bounded}
  There exists a constant $\omega \geq 1$ such that
  \begin{equation}
    \label{def_omega}
    \omega \ge \max \left\{\norm{(x^*, s^*)}_\infty \mid (x^*, y^*, s^*) \in \SolSet \right\}.
  \end{equation}
\end{assumption}
\begin{assumption}
  \label{assumption_full_row_rank}
  $A$ is a full-row rank matrix, i.e., $\rank(A) = m$.
\end{assumption}

IPMs are iterative methods,
so we denote the $k$th iteration by $(x^k, y^k, s^k)\in \Real^n \times \Real^m \times \Real^n$ and
the initial point by $(x^0, y^0, s^0)$.
We denote the duality measure of $k$th iteration as $\mu_k = (x^k)^\top s^k / n$.

Similarly to Wright~\cite[Chapter~6, p112]{wright1997primal},
we choose the initial point as follows to prove the polynomial convergence:
\begin{equation}
  \label{def_initial_point}
  (x^0, y^0, s^0) = \omega (e, 0, e),
\end{equation}
where $\omega$ is a scalar satisfying \eqref{def_omega}.

Given a strictly positive iteration $(x^k, y^k, s^k)$ such that $(x^k, s^k) > 0$,
the strategy of an infeasible IPM is to trace a smooth curve called an approximate central path:
\begin{equation}
  \label{def_ellipsoid}
  \Curve^k = \left\{(x(t), y(t), s(t)) \mid t \in (0,1], (x(1), y(1), s(1)) = (x^k, y^k, s^k) \right\},
\end{equation}
where $(x(t), y(t), s(t))$ is the unique solution of the following system
\begin{subequations}
  \label{curve_to_optimal_solution}
  \begin{align}
     & A x(t) - b = t \ r_b(x^k),                  \\
     & A^\top y(t) + s(t) - c = t \ r_c(y^k, s^k), \\
     & x(t) \circ s(t) = t (x^k \circ s^k),        \\
     & (x(t), s(t))>0.
  \end{align}
\end{subequations}
As $t \rightarrow 0$,
$(x(t), y(t), s(t))$ converges to an optimal solution $(x^*, y^*, s^*) \in \SolSet$.

Since \eqref{curve_to_optimal_solution} does not admit an analytical solution,
arc-search IPMs~\cite{yang2020arc} use an ellipse $\mathcal{E}^k$ to approximate the curve $\Curve^k$,
where
$$
  \mathcal{E}^k = \left\{
  (x(\alpha), y(\alpha), s(\alpha)):
  \begin{aligned}
     & (x(\alpha), y(\alpha), s(\alpha))=\overrightarrow{a} \cos (\alpha)+\overrightarrow{b} \sin (\alpha)+\overrightarrow{c}, \\
     & (x(0), y(0), s(0)) = (x^k, y^k, s^k)                                                                                    \\
  \end{aligned}
  \right\},
$$
$\overrightarrow{a}, \overrightarrow{b} \in \Real^{2n+m}$ are the axes of the ellipse
and $\overrightarrow{c} \in \Real^{2n+m}$ is the center of it.
The values of $(x(\alpha), y(\alpha), s(\alpha)) \in \mathcal{E}^k$
for an angle $\alpha \in [0, \pi / 2]$ is obtained by the following relation~\cite[Theorem~5.1]{yang2020arc}:
\begin{subequations}
  \begin{align*}
    x(\alpha) & = \ x^k - \dot{x}\sin(\alpha)+\ddot{x}(1-\cos(\alpha)), \\
    y(\alpha) & = \ y^k - \dot{y}\sin(\alpha)+\ddot{y}(1-\cos(\alpha)), \\
    s(\alpha) & = \ s^k - \dot{s}\sin(\alpha)+\ddot{s}(1-\cos(\alpha)).
  \end{align*}
\end{subequations}

In arc-search frameworks,
we update the iteration points as $(x^{k+1}, y^{k+1}, s^{k+1}) = (x(\alpha), y(\alpha), s(\alpha))$.
Here,
$(\dot{x}, \dot{y}, \dot{s})$ and $(\ddot{x}, \ddot{y}, \ddot{s})$ are
the first and second derivatives

of \eqref{curve_to_optimal_solution} with respect to $t$, respectively.
Therefore,
they can be obtained as the solutions of the following.
\begin{align}
  \label{first_derivative_original}
  \begin{bmatrix}
    A   & 0      & 0   \\
    0   & A^\top & I   \\
    S^k & 0      & X^k
  \end{bmatrix} \left[\begin{array}{l}
                          \dot{x} \\
                          \dot{y} \\
                          \dot{s}
                        \end{array}\right]
   & =
  \left[\begin{array}{c}
            r_b(x^k)      \\
            r_c(y^k, s^k) \\
            x^k \circ s^k
          \end{array}\right], \\
  \label{second_derivative_original}
  \begin{bmatrix}
    A   & 0      & 0   \\
    0   & A^\top & I   \\
    S^k & 0      & X^k
  \end{bmatrix} \left[\begin{array}{l}
                          \ddot{x} \\
                          \ddot{y} \\
                          \ddot{s}
                        \end{array}\right]
   & =
  \left[\begin{array}{c}
            0 \\
            0 \\
            - 2 \dot{x} \circ \dot{s}
          \end{array}\right].
\end{align}

Lastly,
we define a neighborhood of the approximate central
path~\cite[Chapter~6]{wright1997primal}:
\begin{equation}
  \Neighborhood(\gamma_1, \gamma_2) := \left\{
  (x, y, s) \mid
  \begin{aligned}
     & (x, s)>0, \, x_i s_i \ge \gamma_1 \mu \text{ for } i \in\{1, \ldots, n\},              \\
     & \norm{(r_b(x), r_c(y, s))} \le [\norm{(r_b(x^0), r_c(y^0, s^0))} / \mu_0] \gamma_2 \mu \\
  \end{aligned}
  \right\},
  \label{def_neighborhood}
\end{equation}
where $\gamma_1 \in (0, 1)$ and $\gamma_2 \ge 1$ are given parameters,
and $\norm{(r_b(x), r_c(y, s))}$ is the norm of the vertical
concatenation of $r_b(x)$ and $r_c(y, s)$.

Note that the initial point $(x^0, y^0, s^0)$ defined by \eqref{def_initial_point} always satisfies $(x^0, y^0, s^0) \in \Neighborhood(\gamma_1, \gamma_2)$.
This neighborhood will be used in the convergence analysis.
\section{The proposed method}
\label{section_proposed_method}
In this section,
we propose an II-arc method.

To ensure that the iterates $(x, s)$ remain strictly within the positive orthant,
we introduce a perturbation to \eqref{first_derivative_original},
following the approach of \citet{kojima1993primal},
as described below:
\begin{equation}
  \label{first_derivative_perturbed}
  \begin{bmatrix}
    A   & 0      & 0   \\
    0   & A^\top & I   \\
    S^k & 0      & X^k
  \end{bmatrix} \left[\begin{array}{l}
      \dot{x} \\
      \dot{y} \\
      \dot{s}
    \end{array}\right]
  = \left[\begin{array}{c}
      r_b(x^k)      \\
      r_c(y^k, s^k) \\
      x^k \circ s^k - \sigma \mu_k e
    \end{array}\right],
\end{equation}
where $\sigma \in (0, 1]$ is the constant called centering parameter.
In the subsequent discussion,
$(\dot{x}, \dot{y}, \dot{s})$ denotes the solution of \eqref{first_derivative_perturbed}.
The proposed method solves \eqref{first_derivative_perturbed} and \eqref{second_derivative_original} inexactly
at each iteration to obtain the ellipsoidal approximation.

Several approaches can be considered for solving the Newton system~\eqref{first_derivative_perturbed},
such as the augmented system and the Newton equation system (also known as the normal equation system, NES)~\cite{bellavia2004convergence}.
The NES formula of \eqref{first_derivative_perturbed} is
\begin{equation}
  M^k \dot{y} = \rho_1^k,
  \label{first_derivative_NES}
\end{equation}
where
\begin{subequations}
  \begin{align}
    M^k      & = A (D^k)^2 A^\top,                                                                             \\
    D^k      & = (X^k)^{\frac{1}{2}} (S^k)^{-\frac{1}{2}},                                                     \\
    \rho_1^k & = A (D^k)^2 r_c(y^k, s^k) + r_b(x^k) - A (S^k)^\inverse (x^k \circ s^k - \sigma \mu_k e) \notag \\
             & = A (D^k)^2 A^\top y^k - A (D^k)^2 c + \sigma \mu_k A (S^k)^\inverse e + A x^k - b.
    \label{def_NES_rho_1}
  \end{align}
  \label{def_NES_constants}
\end{subequations}
When we solve \eqref{first_derivative_NES} exactly and obtain $\dot{y}$,
we can compute the other components $\dot{x}$ and $\dot{s}$ of the solution in \eqref{first_derivative_perturbed}.

Let $\tilde{\dot{y}}$ be an inexact solution of $\dot{y}$.
We denote the error of $\tilde{\dot{y}}$ by $r_1^k := M^k \tilde{\dot{y}} - \rho_1^k = M^k \left(\tilde{\dot{y}} - \dot{y}\right)$.
Then, the inexact solution $\tilde{\dot{y}}$ satisfies
\begin{equation}
  M^k \tilde{\dot{y}} = \rho_1^k + r_1^k,
  \label{inexact_first_derivative_NES}
\end{equation}
We can compute the other inexact components $\tilde{\dot{x}}$ and $\tilde{\dot{s}}$ from $\tilde{\dot{y}}$,
and these inexact components satisfy the following conditions~\cite{mohammadisiahroudi2024efficient}:
\begin{subequations}
  \begin{align}
    A \tilde{\dot{x}}                         & = r_b(x^k) + r_1^k, \label{inexact_first_derivative_NES_main_residual} \\
    A^\top \tilde{\dot{y}} + \tilde{\dot{s}}  & = r_c(y^k, s^k),                                                       \\
    S^k \tilde{\dot{x}} + X^k \tilde{\dot{s}} & = X^k s^k - \sigma \mu_k e.
    \label{inexact_first_derivative_NES_duality}
  \end{align}
\end{subequations}

\citet{bellavia2004convergence} pointed out that in the NES framework,
the residual must decrease at least $\order{\sqrt{n} \log n}$ times faster than the duality measure $\mu_k$ in order to guarantee convergence.
\citet{oliveira2005new} proposed a preconditioned matrix for NES,
and \citet{monteiro2004uniform} prove that the condition number of the preconditioned matrix can be bounded above by $\order{1}$ that is independent of the diagonal scaling $D_k$ and hence independent of the duality gap $\mu_k$,
while the one of NES is bounded above by $\order{1 / \mu^2}$~\cite{mohammadisiahroudi2018improvements}.
The modified NES formulation using this preconditioner is called MNES~\cite{mohammadisiahroudi2024efficient}.
The preconditioner relaxes the required decreasing of the residual,
and facilitates the derivation of iteration complexity bounds under less stringent assumptions on residual accuracy.
For this reason, this paper adopts the MNES formulation in its complexity analysis.

Since $A$ is full row rank from Assumption~\ref{assumption_full_row_rank},
we can choose an arbitrary basis $\hat{B} \subset \{1,2,\dots,n\}$ such that $\abs*{\hat{B}} = m$ and $A_{\hat{B}} \in \Real^{m \times m}$ is nonsingular.
Such a $\hat{B}$ can be obtained with, for example, the maximum weight basis algorithm~\cite{monteiro2003convergence},
which requires $\order{n^2 m}$ computational costs.
Now we can adapt \eqref{first_derivative_NES} to
\begin{equation}
  \hat{M}^k \dot{z} = \hat{\rho}_1^k,
  \label{first_derivative_MNES}
\end{equation}
where
\begin{subequations}
  \label{def_MNES_constants}
  \begin{align}
    \hat{M}^k      & = (D_{\hat{B}}^k)^\inverse A_{\hat{B}}^\inverse M^k ((D_{\hat{B}}^k)^\inverse A_{\hat{B}}^\inverse)^\top, \label{def_MNES_coef_matrix} \\
    D_{\hat{B}}^k  & = (X^k_{\hat{B}})^\frac{1}{2} (S_{\hat{B}}^k)^{-\frac{1}{2}},                                                                          \\
    \hat{\rho}_1^k & = (D_{\hat{B}}^k)^\inverse A_{\hat{B}}^\inverse \rho_1^k.
  \end{align}
\end{subequations}
The inexact solution $\tilde{\dot{z}}$ of \eqref{first_derivative_MNES} satisfies
\begin{equation}
  \hat{M}^k \tilde{\dot{z}} = \hat{\rho}_1^k + \hat{r}^k_1,
  \label{inexact_first_derivative_MNES}
\end{equation}
where $\hat{r}_1^k$ is the error of $\tilde{\dot{z}}$ defined as
\begin{equation*}
  \hat{r}_1^k := \hat{M}^k \tilde{\dot{z}} - \hat{\rho}_1^k = \hat{M}_k \left(\tilde{\dot{z}} - \dot{z}\right).
\end{equation*}
We discuss the range that the error $\hat{r}_1^k$ should satisfy for the convergence analysis below.

Then,
we can obtain the first derivative $(\tilde{\dot{x}}, \tilde{\dot{y}}, \tilde{\dot{s}})$
from the inexact solution in \eqref{inexact_first_derivative_MNES}
and the steps below:
\begin{subequations}
  \label{resolution_first_derivative_from_MNES}
  \begin{align}
    \tilde{\dot{y}} & = \left(\left(D_{\hat{B}}^k\right)^{-1} A_{\hat{B}}^{-1}\right)^\top \tilde{\dot{z}}  \\
    \tilde{\dot{s}} & = r_c(y^k, s^k) - A^T \tilde{\dot{y}}                                                 \\
    v_1^k           & = \left(v_{\hat{B}}^k, v_{\hat{N}}^k\right)=\left(D_{\hat{B}}^k \hat{r}^k_1, 0\right) \\
    \tilde{\dot{x}} & = x^k - (D^k)^2 \tilde{\dot{s}} - \sigma \mu_k (S^k)^\inverse e - v_1^k.
  \end{align}
\end{subequations}

We also apply the MNES formulation to the second derivative~\eqref{second_derivative_original}.
Letting
\begin{align*}
  \rho_2^k  = 2 A (S^k)^\inverse \tilde{\dot{x}} \circ \tilde{\dot{s}}, \qquad
  \hat{\rho}_2^k  = (D_{\hat{B}}^k)^\inverse A_{\hat{B}}^\inverse \rho_2^k,
\end{align*}
we have
\begin{equation}
  \hat{M}^k \ddot{z} = \hat{\rho}_2^k
  \label{second_derivative_MNES}
\end{equation}
with the same definition of $\hat{M}^k$ as in \eqref{def_MNES_coef_matrix}.
We use $\tilde{\ddot{z}}$ to denote the inexact solution of \eqref{second_derivative_MNES},
then we have
\begin{equation}
  \hat{M}^k \tilde{\ddot{z}} = \hat{\rho}_2^k + \hat{r}_2^k,
  \label{inexact_second_derivative_MNES}
\end{equation}
where $\hat{r}_2^k$ is defined as $\hat{r}_2^k := \hat{M}_k \left(\tilde{\ddot{z}} - \ddot{z}\right)$.
Similarly to \eqref{resolution_first_derivative_from_MNES},
we compute
the inexact second derivative $(\tilde{\ddot{x}}, \tilde{\ddot{y}}, \tilde{\ddot{s}})$
as follows:
\begin{subequations}
  \begin{align*}
    \tilde{\ddot{y}} & = \left(\left(D_{\hat{B}}^k\right)^{-1} A_{\hat{B}}^{-1}\right)^\top \tilde{\ddot{z}},         \\
    \tilde{\ddot{s}} & = - A^T \tilde{\ddot{y}},                                                                      \\
    v_2^k            & = \left(v_{\hat{B}}^k, v_{\hat{N}}^k\right)=\left(D_{\hat{B}}^k \hat{r}_2^k, 0\right),         \\
    \tilde{\ddot{x}} & = - (D^k)^2 \tilde{\ddot{s}} - 2 (S^k)^\inverse \tilde{\dot{x}} \circ \tilde{\dot{s}} - v_2^k.
  \end{align*}
  \label{resolution_second_derivative_from_MNES}
\end{subequations}

Using the inexact solutions obtained above,
the next iteration point on the ellipsoidal arc $\mathcal{E}^k$ is computed with the following formula:
\begin{subequations}
  \label{def_variable_alpha_with_inexact_derivatives}
  \begin{align}
    x^k(\alpha) = & \ x^k-\tilde{\dot{x}}\sin(\alpha)+\tilde{\ddot{x}}(1-\cos(\alpha)), \\
    y^k(\alpha) = & \ y^k-\tilde{\dot{y}}\sin(\alpha)+\tilde{\ddot{y}}(1-\cos(\alpha)), \\
    s^k(\alpha) = & \ s^k-\tilde{\dot{s}}\sin(\alpha)+\tilde{\ddot{s}}(1-\cos(\alpha)).
  \end{align}
\end{subequations}

To present the framework of the proposed method,
we prepare the following functions:
\begin{subequations}
  \label{def_G_g_h}
  \begin{align}
    G_i^k(\alpha) & = x_i^k(\alpha) s_i^k(\alpha) - \gamma_1 \mu_k(\alpha) \text { for } i \in\{1, \ldots, n\}, \\
    g^k(\alpha)   & = x^k(\alpha)^\top s^k(\alpha) - (1 - \sin(\alpha))(x^k)^\top s^k,                          \\
    h^k(\alpha)   & = \left(1 - (1-\beta) \sin(\alpha)\right)(x^k)^\top s^k-x^k(\alpha)^\top s^k(\alpha).
  \end{align}
\end{subequations}
Here, $h^k(\alpha) \ge 0$ corresponds to the Armijo condition with respect to the duality gap $\mu$.
In Section~\ref{section_theoretical_proof},
we will show that the sequence $\{(x^k, y^k, s^k)\}$ converges to an optimal solution by selecting a step size $\alpha$ satisfying:
\begin{equation}
  \label{conditions_G_g_h_no_less_than_0}
  G_i^k(\alpha) \ge 0 \text { for } i \in\{1, \ldots, n\}, \quad
  g^k(\alpha) \ge 0, \quad
  h^k(\alpha) \ge 0.
\end{equation}

Under this condition,
the next lemma ensures that a next iteration point $(x^k(\alpha), y^k(\alpha), s^k(\alpha))$ is in the neighborhood $\Neighborhood(\gamma_1, \gamma_2)$.
\begin{lemma}
  \label{lemma_in_neighborhood}
  Assume a step length $\alpha \in (0, \pi / 2]$ satisfies $G_i^k(\alpha) \ge 0$ and $g^k(\alpha) \ge 0$.
  Then,
  $(x^k(\alpha), y^k(\alpha), s^k(\alpha)) \in \Neighborhood(\gamma_1, \gamma_2)$.
\end{lemma}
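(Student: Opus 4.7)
The plan is to verify the three defining conditions of $\Neighborhood(\gamma_1, \gamma_2)$ in \eqref{def_neighborhood} at the candidate iterate $(x^k(\alpha), y^k(\alpha), s^k(\alpha))$. The first condition I would address is the proximity inequality $x_i^k(\alpha) s_i^k(\alpha) \ge \gamma_1 \mu_k(\alpha)$, which by the very definition of $G_i^k(\alpha)$ is exactly the hypothesis $G_i^k(\alpha) \ge 0$ of the lemma, so this is immediate.

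For the strict positivity $(x^k(\alpha), s^k(\alpha)) > 0$, I would invoke a continuity argument along the arc. By induction the current iterate satisfies $(x^k, s^k) > 0$, and each coordinate $x_i^k(\alpha), s_i^k(\alpha)$ is continuous in $\alpha$. Since $G_i^k(\alpha) \ge 0$ forces the product $x_i^k(\alpha) s_i^k(\alpha) \ge \gamma_1 \mu_k(\alpha) \ge 0$, neither factor can change sign, and both remain strictly positive along $[0,\alpha]$; here the nonnegativity of $\mu_k(\alpha)$ itself comes from $g^k(\alpha) \ge 0$, which yields $\mu_k(\alpha) \ge (1-\sin\alpha)\mu_k$.

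The third and most delicate condition is the residual bound, and this is where Lemma~\ref{lemma_decrease_constraint_residuals} enters. I would use it to express $r_b(x^k(\alpha))$ and $r_c(y^k(\alpha), s^k(\alpha))$ in terms of $r_b(x^k), r_c(y^k, s^k)$ and the inexactness quantities $v_1^k, v_2^k$. In the exact case, substituting \eqref{def_variable_alpha_with_inexact_derivatives} into the residual definitions and using $A\dot{x}=r_b(x^k)$, $A\ddot{x}=0$, $A^\top \dot{y}+\dot{s}=r_c(y^k,s^k)$, $A^\top\ddot{y}+\ddot{s}=0$ would yield the clean scaling $(r_b,r_c)(\alpha)=(1-\sin\alpha)(r_b(x^k),r_c(y^k,s^k))$. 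Combining this scaling with the inductive hypothesis $\|(r_b(x^k), r_c(y^k,s^k))\| \le [\|(r_b(x^0),r_c(y^0,s^0))\|/\mu_0]\gamma_2 \mu_k$ and with $\mu_k(\alpha) \ge (1-\sin\alpha)\mu_k$ delivers the target neighborhood residual bound at the new iterate.

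The main obstacle will be absorbing the inexactness errors $v_1^k$ and $v_2^k$ from the MNES solves. Because $\tilde{\dot{x}}$ and $\tilde{\ddot{x}}$ include these perturbation vectors, the clean identities $A\dot{x}=r_b(x^k)$ and $A\ddot{x}=0$ only hold up to $A$-images of $v_1^k, v_2^k$, and Lemma~\ref{lemma_decrease_constraint_residuals} must show that these contributions do not spoil the $(1-\sin\alpha)$ scaling nor the factor $\gamma_2$. This is exactly the role of tying the inexactness tolerance in the algorithm to $\sin\alpha$ and $\mu_k$, and once that lemma is in hand the overall argument parallels~\cite[Lemma~4.5]{mohammadisiahroudi2022efficient} step by step to conclude $(x^k(\alpha), y^k(\alpha), s^k(\alpha)) \in \Neighborhood(\gamma_1, \gamma_2)$.
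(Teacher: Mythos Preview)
Your overall structure is correct and matches the paper's approach: proximity is immediate from $G_i^k(\alpha)\ge 0$, strict positivity uses a continuity argument along the arc, and the residual bound combines Lemma~\ref{lemma_decrease_constraint_residuals} with the inequality $\mu_k(\alpha)\ge(1-\sin\alpha)\mu_k$ coming from $g^k(\alpha)\ge 0$.

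However, your final paragraph misreads the MNES construction. The recovery steps \eqref{resolution_first_derivative_from_MNES} and the analogous steps for the second derivative are designed precisely so that the inexactness is channeled \emph{entirely} into the complementarity block: by Lemma~\ref{lemma_inexact_solution_MNES_conditions} and \eqref{indexact_second_derivative_conditions}, the identities $A\tilde{\dot{x}}=r_b(x^k)$, $A^\top\tilde{\dot{y}}+\tilde{\dot{s}}=r_c(y^k,s^k)$, $A\tilde{\ddot{x}}=0$, and $A^\top\tilde{\ddot{y}}+\tilde{\ddot{s}}=0$ hold \emph{exactly}, with no $v_1^k$ or $v_2^k$ contamination. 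Consequently Lemma~\ref{lemma_decrease_constraint_residuals} is an exact equality, not a bound with error terms to absorb, and the ``obstacle'' you anticipate simply does not arise; the residual part of the argument is as clean as in the exact-solve case, and no tolerance-tying is needed for this step. The errors $v_1^k,v_2^k$ only enter through \eqref{inexact_first_derivative_MNES_duality} and \eqref{inexact_second_derivative_duality}, i.e.\ in the analysis of $G_i^k$, $g^k$, $h^k$ later in the paper, not here.
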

This lemma can be proved using the same approach as \citet[Lemma~4.5]{mohammadisiahroudi2024efficient}.

Lastly,
we discuss the range in which the sequence with inexact solutions
attain the polynomial iteration complexity.
We assume the following inequality for the error $\hat{r}_1^k$ of \eqref{inexact_first_derivative_MNES} and $\hat{r}_2^k$ of \eqref{inexact_second_derivative_MNES}:
\begin{equation}
  \norm{\hat{r}_i^k} \leq \eta \frac{\sqrt{\mu_k}}{\sqrt{n}},
  \quad \forall i \in \{1, 2\}
  \label{def_upper_derivatives_residual_MNES}
\end{equation}
where $\eta \in [0, 1)$ is an enforcing parameter.

To prove the polynomial iteration complexity of the proposed algorithm in Proposition~\ref{proposition_lower_bound_of_step_size} below,
we set the parameters so that
\begin{subequations}
  \label{parameter_conditions}
  \begin{align}
    (1 - \gamma_1) \sigma - (1 + \gamma_1) \eta & > 0, \label{parameter_condition_for_G_i}                                   \\
    \beta                                       & > \sigma + \eta. \label{parameter_condition_beta_more_than_sigma_plus_eta}
  \end{align}
\end{subequations}

Algorithm~\ref{algorithm_II_arc_IPM} gives the framework of the proposed method (II-arc).
\begin{algorithm}[ht]
  \caption{The inexact infeasible arc-search interior-point method (II-arc)}
  \label{algorithm_II_arc_IPM}
  \begin{algorithmic}[1]
    \Require $\zeta > 0$,
    $\gamma_1 \in (0,1)$,
    $\gamma_2 \ge 1$,
    $\sigma, \eta, \beta$ satisfying \eqref{parameter_conditions} and
    an initial point $(x^0, y^0, s^0)$ meeting \eqref{def_initial_point}.
    \Ensure $\zeta$-optimal solution $(x^k, y^k, s^k)$
    \State $k \leftarrow 0$
    \While {$(x^k, y^k, s^k) \notin S_\zeta$}  \label{line_algo_II_arc_search_checking_stop}
    \State $\mu_k \leftarrow (x^k)^\top s^k / n$
    \State Calculate $(\tilde{\dot{x}}, \tilde{\dot{y}}, \tilde{\dot{s}})$ by solving \eqref{first_derivative_MNES} inexactly satisfying \eqref{def_upper_derivatives_residual_MNES}.
    \State Calculate $(\tilde{\ddot{x}}, \tilde{\ddot{y}}, \tilde{\ddot{s}})$ by solving \eqref{second_derivative_MNES} inexactly satisfying \eqref{def_upper_derivatives_residual_MNES}.
    \label{line_algo_II_arc_search_calculate_second_derivative}
    \State $\alpha_k \leftarrow \max \left\{\alpha \in (0, \pi / 2] \mid \alpha \text{ satisfies } \eqref{conditions_G_g_h_no_less_than_0}\right\}$
    \label{line_algo_II_arc_search_decide_step_size}
    \State Set $(x^{k+1}, y^{k+1}, s^{k+1}) = (x^k(\alpha_k), y^k(\alpha_k), s^k(\alpha_k))$ by \eqref{def_variable_alpha_with_inexact_derivatives}.
    \State $k \leftarrow k+1$
    \EndWhile
  \end{algorithmic}
\end{algorithm}
\section{Theoretical proof}
\label{section_theoretical_proof}
In this section,
we prove the convergence of Algorithm~\ref{algorithm_II_arc_IPM} and its polynomial iteration complexity.
Our analysis is close to Mohammadisiahroudi et al.~\cite{mohammadisiahroudi2024efficient},
but it also employs properties of arc-search IPMs.

First,
we evaluate the constraint residuals \eqref{residuals_constraints}.
From \eqref{inexact_first_derivative_MNES} and \eqref{resolution_first_derivative_from_MNES},
the residual appears only in the last equation as a term
$S^k v_1^k$,
as the following lemma shows.
\begin{lemma}
  \label{lemma_inexact_solution_MNES_conditions}
  For the inexact first derivative $(\tilde{\dot{x}}, \tilde{\dot{y}}, \tilde{\dot{s}})$ of \eqref{curve_to_optimal_solution}
  obtained by the inexact solution of \eqref{first_derivative_MNES} and the steps in \eqref{resolution_first_derivative_from_MNES},
  we have
  \begin{subequations}
    \begin{align}
      A \tilde{\dot{x}}                         & = r_b(x^k), \label{inexact_first_derivative_MNES_main_residual}      \\
      A^\top \tilde{\dot{y}} + \tilde{\dot{s}}  & = r_c(y^k, s^k), \label{inexact_first_derivative_MNES_dual_residual} \\
      S^k \tilde{\dot{x}} + X^k \tilde{\dot{s}} & = X^k s^k - \sigma \mu_k e - S^k v_1^k.
      \label{inexact_first_derivative_MNES_duality}
    \end{align}
  \end{subequations}
\end{lemma}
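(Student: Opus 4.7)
The plan is to verify the three equalities one at a time, starting from the two routine ones and leaving the MNES-related equality for last. Equation \eqref{inexact_first_derivative_MNES_dual_residual} is immediate: rearranging the definition $\tilde{\dot{s}} = r_c(y^k, s^k) - A^\top \tilde{\dot{y}}$ in \eqref{resolution_first_derivative_from_MNES} gives exactly $A^\top \tilde{\dot{y}} + \tilde{\dot{s}} = r_c(y^k, s^k)$.

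For \eqref{inexact_first_derivative_MNES_duality}, I would multiply the defining equation $\tilde{\dot{x}} = x^k - (D^k)^2 \tilde{\dot{s}} - \sigma \mu_k (S^k)^{-1} e - v_1^k$ through by $S^k$. Using the identities $(D^k)^2 = X^k (S^k)^{-1}$, so that $S^k (D^k)^2 = X^k$, and $S^k x^k = X^k s^k$, the expression collapses to $S^k \tilde{\dot{x}} = X^k s^k - X^k \tilde{\dot{s}} - \sigma \mu_k e - S^k v_1^k$, which is the claimed identity after moving $X^k \tilde{\dot{s}}$ to the left-hand side.

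The substantive part is \eqref{inexact_first_derivative_MNES_main_residual}. My strategy is to apply $A$ to the expression for $\tilde{\dot{x}}$, then substitute $\tilde{\dot{s}} = A^\top y^k + s^k - c - A^\top \tilde{\dot{y}}$ and use $(D^k)^2 s^k = x^k$ to obtain
\begin{equation*}
  A\tilde{\dot{x}} = A x^k - M^k y^k - A x^k + A (D^k)^2 c + M^k \tilde{\dot{y}} - \sigma \mu_k A (S^k)^{-1} e - A v_1^k.
\end{equation*}
Recognizing the combination $M^k y^k - A(D^k)^2 c + \sigma \mu_k A (S^k)^{-1} e + A x^k - b$ as $\rho_1^k$ from \eqref{def_NES_rho_1}, this rearranges to
\begin{equation*}
  A \tilde{\dot{x}} = r_b(x^k) + M^k \tilde{\dot{y}} - \rho_1^k - A v_1^k.
\end{equation*}
It therefore suffices to show $M^k \tilde{\dot{y}} - \rho_1^k - A v_1^k = 0$. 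Here is where the MNES machinery is used: multiplying \eqref{inexact_first_derivative_MNES} on the left by $A_{\hat{B}} D_{\hat{B}}^k$ and invoking the definitions in \eqref{def_MNES_constants} together with $\tilde{\dot{y}} = ((D_{\hat{B}}^k)^{-1} A_{\hat{B}}^{-1})^\top \tilde{\dot{z}}$ produces $M^k \tilde{\dot{y}} = \rho_1^k + A_{\hat{B}} D_{\hat{B}}^k \hat{r}_1^k$. Finally, the definition $v_1^k = (D_{\hat{B}}^k \hat{r}_1^k, 0)$ gives $A v_1^k = A_{\hat{B}} D_{\hat{B}}^k \hat{r}_1^k$, so the residual and error terms cancel exactly.

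The only real obstacle is bookkeeping: one must keep track of how the basic/non-basic split in the definition of $v_1^k$ interacts with the left-multiplication by $A_{\hat{B}} D_{\hat{B}}^k$ that inverts the MNES transformation. The point is that the MNES scaling is precisely designed so that the inexact residual $\hat{r}_1^k$, after being lifted back by $(D_{\hat{B}}^k, A_{\hat{B}})$ and zero-padded on $\hat{N}$, appears in the $x$-update in a way that leaves the primal feasibility residual $A\tilde{\dot{x}} = r_b(x^k)$ untouched. Once this cancellation is recorded, the lemma follows.
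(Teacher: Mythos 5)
Your proof is correct, and it follows exactly the route the paper indicates: the paper omits the argument with a pointer to Mohammadisiahroudi et al.~(Lemma~4.1), explicitly citing equations \eqref{first_derivative_MNES} and \eqref{resolution_first_derivative_from_MNES} as the ingredients, which is precisely what you use. The two easy equalities fall out of the definitions of $\tilde{\dot{s}}$ and $\tilde{\dot{x}}$ as you say, and the essential content — multiplying \eqref{inexact_first_derivative_MNES} by $A_{\hat{B}}D_{\hat{B}}^k$ to undo the MNES scaling, obtaining $M^k\tilde{\dot{y}}=\rho_1^k+A_{\hat{B}}D_{\hat{B}}^k\hat{r}_1^k$, and observing that $Av_1^k=A_{\hat{B}}D_{\hat{B}}^k\hat{r}_1^k$ by the zero padding on $\hat{N}$ — is exactly the cancellation that makes the construction work.
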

Lemma~\ref{lemma_inexact_solution_MNES_conditions} can be proved
from \eqref{first_derivative_MNES} and \eqref{resolution_first_derivative_from_MNES}
in the same way as Mohammadisiahroudi et al.~\cite[Lemma~4.1]{mohammadisiahroudi2024efficient},
thus we omit the proof.
As in Lemma~\ref{lemma_inexact_solution_MNES_conditions},
$(\tilde{\ddot{x}}, \tilde{\ddot{y}}, \tilde{\ddot{s}})$ obtained by \eqref{inexact_second_derivative_MNES} and \eqref{resolution_second_derivative_from_MNES} satisfies
\begin{subequations}
  \label{indexact_second_derivative_conditions}
  \begin{align}
    A \tilde{\ddot{x}}                          & = 0, \label{inexact_second_derivative_main_residual}    \\
    A^\top \tilde{\ddot{y}} + \tilde{\ddot{s}}  & = 0, \label{inexact_second_derivative_dual_residual}    \\
    S^k \tilde{\ddot{x}} + X^k \tilde{\ddot{s}} & = -2 \tilde{\dot{x}} \circ \tilde{\dot{s}} - S^k v_2^k.
    \label{inexact_second_derivative_duality}
  \end{align}
\end{subequations}
Therefore,
the following lemma holds from \eqref{inexact_first_derivative_MNES_main_residual},
\eqref{inexact_first_derivative_MNES_dual_residual},
\eqref{inexact_second_derivative_main_residual} and \eqref{inexact_second_derivative_dual_residual}
due to \eqref{def_variable_alpha_with_inexact_derivatives}.
\begin{lemma}[{\cite[Lemma~7.2]{yang2020arc}}]
  \label{lemma_decrease_constraint_residuals}
  For each iteration $k$,
  the following relations hold.
  \begin{align*}
    r_b(x^{k+1})          & = r_b(x^k)\left(1 - \sin(\alpha_k)\right),       \\
    r_c(y^{k+1}, s^{k+1}) & = r_c(y^k, s^k) \left(1 - \sin(\alpha_k)\right).
  \end{align*}
\end{lemma}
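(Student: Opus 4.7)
The plan is to verify both identities by direct substitution of the arc update formula \eqref{def_variable_alpha_with_inexact_derivatives} into the definitions of $r_b$ and $r_c$ in \eqref{residuals_constraints}, and then applying the four linear identities that describe how $A$ (respectively $A^\top$ and $I$) act on the inexact first and second derivatives.

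Concretely, for the primal residual I would write
\begin{align*}
r_b(x^{k+1}) &= A x^k(\alpha_k) - b \\
&= (A x^k - b) - (A \tilde{\dot{x}}) \sin(\alpha_k) + (A \tilde{\ddot{x}})(1 - \cos(\alpha_k)),
\end{align*}
and then substitute $A \tilde{\dot{x}} = r_b(x^k)$ from \eqref{inexact_first_derivative_MNES_main_residual} together with $A \tilde{\ddot{x}} = 0$ from \eqref{inexact_second_derivative_main_residual}. The $(1-\cos(\alpha_k))$ term disappears and the remaining two terms factor into $r_b(x^k)(1 - \sin(\alpha_k))$, which is the desired identity.

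For the dual residual I would perform the exact same manipulation: expand
\begin{align*}
r_c(y^{k+1}, s^{k+1}) &= A^\top y^k(\alpha_k) + s^k(\alpha_k) - c \\
&= r_c(y^k, s^k) - (A^\top \tilde{\dot{y}} + \tilde{\dot{s}}) \sin(\alpha_k) + (A^\top \tilde{\ddot{y}} + \tilde{\ddot{s}})(1 - \cos(\alpha_k)),
\end{align*}
and use \eqref{inexact_first_derivative_MNES_dual_residual} to replace the bracket multiplying $\sin(\alpha_k)$ by $r_c(y^k, s^k)$, together with \eqref{inexact_second_derivative_dual_residual} to kill the $(1-\cos(\alpha_k))$ bracket. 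Factoring gives $r_c(y^k, s^k)(1 - \sin(\alpha_k))$.

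Honestly, there is no substantive obstacle here: the lemma is a one-line bookkeeping consequence of the arc-search update and the fact that both derivative systems place $r_b(x^k)$ and $r_c(y^k, s^k)$ in the first two block rows for the first derivative and zeros there for the second derivative. The only thing worth noting is that these identities hold even with inexact solves, precisely because the inexactness (captured by $\hat{r}_1^k$ and $\hat{r}_2^k$) was absorbed entirely into the third block row via $v_1^k$ and $v_2^k$ in \eqref{resolution_first_derivative_from_MNES} and \eqref{resolution_second_derivative_from_MNES}; the first two block rows are satisfied exactly, which is what makes the residuals contract by exactly $(1-\sin(\alpha_k))$.
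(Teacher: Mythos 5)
Your proof is correct and matches the paper's intent exactly: the paper states that the lemma follows from \eqref{inexact_first_derivative_MNES_main_residual}, \eqref{inexact_first_derivative_MNES_dual_residual}, \eqref{inexact_second_derivative_main_residual}, and \eqref{inexact_second_derivative_dual_residual} together with the arc update \eqref{def_variable_alpha_with_inexact_derivatives}, which is precisely the substitution you carry out. Your closing remark — that the inexactness is confined to the third block row via $v_1^k$ and $v_2^k$, so the first two block rows hold exactly — is exactly why the contraction factor $(1-\sin(\alpha_k))$ is preserved in the inexact setting.
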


For the following discussions,
we introduce the notation:
$$
  \nu_k = \prod_{i=0}^{k-1} (1 - \sin(\alpha_i)).
$$
From Lemma~\ref{lemma_decrease_constraint_residuals},
we can obtain
\begin{subequations}
  \label{residuals_decreasing}
  \begin{align}
    r_b(x^k)      & = \nu_k r_b(x^0)      \\
    r_c(y^k, s^k) & = \nu_k r_c(y^0, s^0)
  \end{align}
\end{subequations}

In the next proposition,
we prove the existence of the lower bound of the step size $\alpha_k$
to guarantee that Algorithm~\ref{algorithm_II_arc_IPM} is well defined.
\begin{proposition}
  \label{proposition_lower_bound_of_step_size}
  Let $\{(x^k, y^k, s^k)\}$ be the sequence generated by Algorithm~\ref{algorithm_II_arc_IPM}.
  Then,
  there exists $\hat{\alpha} > 0$ satisfying
  \eqref{conditions_G_g_h_no_less_than_0} for any $\alpha_k \in (0, \hat{\alpha}]$ and
  $$
    \sin(\hat{\alpha}) = \frac{C}{n^{1.5}},
  $$
  where $C$ is a positive constant.
\end{proposition}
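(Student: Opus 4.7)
The plan is to expand $x_i^k(\alpha) s_i^k(\alpha)$ and $x^k(\alpha)^\top s^k(\alpha)$ as polynomials in $\sin\alpha$ and $1-\cos\alpha$, identify a strictly positive first-order term in each of $G_i^k(\alpha)$, $g^k(\alpha)$, and $h^k(\alpha)$, and then show that the remaining second-order terms are dominated by it once $\sin\alpha$ is at most a multiple of $n^{-1.5}$.

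The first step is to substitute the update formulas in \eqref{def_variable_alpha_with_inexact_derivatives} together with the componentwise identities
\[
s_i^k \tilde{\dot{x}}_i + x_i^k \tilde{\dot{s}}_i = x_i^k s_i^k - \sigma \mu_k - s_i^k (v_1^k)_i, \quad s_i^k \tilde{\ddot{x}}_i + x_i^k \tilde{\ddot{s}}_i = -2\tilde{\dot{x}}_i \tilde{\dot{s}}_i - s_i^k (v_2^k)_i,
\]
coming from Lemma~\ref{lemma_inexact_solution_MNES_conditions} and \eqref{indexact_second_derivative_conditions}. This yields
\[
x_i^k(\alpha) s_i^k(\alpha) = x_i^k s_i^k (1-\sin\alpha) + \sigma \mu_k \sin\alpha + s_i^k (v_1^k)_i \sin\alpha + R_i(\alpha),
\]
where $R_i(\alpha)$ collects the five second-order contributions $-2\tilde{\dot{x}}_i\tilde{\dot{s}}_i(1-\cos\alpha)$, $-s_i^k(v_2^k)_i(1-\cos\alpha)$, $\tilde{\dot{x}}_i\tilde{\dot{s}}_i \sin^2\alpha$, $-(\tilde{\dot{x}}_i\tilde{\ddot{s}}_i+\tilde{\ddot{x}}_i\tilde{\dot{s}}_i)\sin\alpha(1-\cos\alpha)$, and $\tilde{\ddot{x}}_i \tilde{\ddot{s}}_i (1-\cos\alpha)^2$. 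Summing over $i$ and dividing by $n$ gives an analogous expansion of $\mu_k(\alpha)$. Using $x_i^k s_i^k \ge \gamma_1 \mu_k$ from the neighborhood $\Neighborhood(\gamma_1,\gamma_2)$ together with the Cauchy--Schwarz bounds $|s_i^k (v_j^k)_i| \le \eta\mu_k$ and $|(s^k)^\top v_j^k| \le \eta\mu_k$ derived from \eqref{def_upper_derivatives_residual_MNES} via $s_i^k (D_{\hat{B}}^k)_{ii} = (x_i^k s_i^k)^{1/2}$ and $\sum_i x_i^k s_i^k = n\mu_k$, the leading first-order coefficients in $G_i^k(\alpha)$, $g^k(\alpha)$, and $h^k(\alpha)$ come out to $((1-\gamma_1)\sigma - (1+\gamma_1)\eta)\mu_k$, a positive multiple of $n\mu_k$, and $(\beta-\sigma-\eta)n\mu_k$, all strictly positive under \eqref{parameter_conditions}.

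The second step is to bound the residuals $R_i(\alpha)$ and $\sum_i R_i(\alpha)$. Using $1-\cos\alpha \le \sin^2\alpha$ on $[0,\pi/2]$ and Cauchy--Schwarz, it suffices to establish estimates of the form $\|\tilde{\dot{x}} \circ \tilde{\dot{s}}\|_1 = O(n\mu_k)$, $\|\tilde{\ddot{x}} \circ \tilde{\ddot{s}}\|_1 = O(n^2\mu_k)$, and mixed cross-terms of order $O(n^{1.5}\mu_k)$. These rest on norm estimates for $\tilde{\dot{x}}, \tilde{\dot{s}}, \tilde{\ddot{x}}, \tilde{\ddot{s}}$ extracted from the MNES formulation, the error bound \eqref{def_upper_derivatives_residual_MNES}, the membership of $(x^k, y^k, s^k)$ in $\Neighborhood(\gamma_1, \gamma_2)$, and the identity $(r_b(x^k), r_c(y^k,s^k)) = \nu_k (r_b(x^0), r_c(y^0,s^0))$ from Lemma~\ref{lemma_decrease_constraint_residuals}. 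They are the natural inexact-MNES analogue of the corresponding estimates for the exact arc-search IPM in \cite{yang2020arc}, with an additional $O(\eta)$ penalty absorbed into the constants.

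Combining both steps, each of the three conditions in \eqref{conditions_G_g_h_no_less_than_0} reduces to an inequality of the form $(\text{positive constant})\cdot \mu_k \sin\alpha - (\text{constant})\cdot n^{1.5}\mu_k \sin^2\alpha \ge 0$, from which one obtains $\sin\hat\alpha = C/n^{1.5}$ with $C$ the minimum of the three thresholds. The main obstacle is the second step: establishing the derivative norm estimates for the infeasible inexact MNES setting, since both the scaled residual $\nu_k (r_b(x^0), r_c(y^0,s^0))$ (controlled only through the neighborhood) and the inexactness error $\hat{r}_j^k$ (controlled through \eqref{def_upper_derivatives_residual_MNES}) must be absorbed into the same bound without degrading the $n^{1.5}$ exponent. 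Once these bounds are in place, the rest of the argument is a direct trigonometric expansion.
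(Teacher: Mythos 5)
Your overall architecture (expand the products, isolate a strictly positive first-order coefficient under \eqref{parameter_conditions}, dominate the remainder) matches the paper's strategy, and the identities you derive from Lemma~\ref{lemma_inexact_solution_MNES_conditions} and \eqref{indexact_second_derivative_conditions} are correct. But the argument as written has two genuine gaps that together prevent it from reaching the stated $n^{-1.5}$ rate.

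First, the norm estimates you posit are the \emph{feasible}-IPM orders, not the infeasible ones. You claim $\norm{\tilde{\dot{x}} \circ \tilde{\dot{s}}} = O(n\mu_k)$, $\norm{\tilde{\ddot{x}} \circ \tilde{\ddot{s}}} = O(n^2\mu_k)$, and mixed products $O(n^{1.5}\mu_k)$. In the infeasible setting, however, the residual carried along the iterations forces Lemma~\ref{lemma_upper_nu_x_s}, $\nu_k \norm{(x^k,s^k)}_1 \le C_1 n \mu_k$, which in turn yields $\max\{\norm{(D^k)^{-1}\tilde{\dot{x}}}, \norm{D^k\tilde{\dot{s}}}\} = O(n\sqrt{\mu_k})$ rather than $O(\sqrt{n\mu_k})$; similarly $\max\{\norm{(D^k)^{-1}\tilde{\ddot{x}}}, \norm{D^k\tilde{\ddot{s}}}\} = O(n^2\sqrt{\mu_k})$. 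The resulting product bounds are $O(n^2\mu_k)$, $O(n^4\mu_k)$, $O(n^3\mu_k)$ — each a factor of $n$ larger per derivative level than what you state. You flag these bounds as ``the main obstacle'' and defer their proof, which is already a serious gap; they are also incorrectly stated.

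Second, your remainder $R_i(\alpha)$ keeps the contributions $-2\tilde{\dot{x}}_i\tilde{\dot{s}}_i(1-\cos\alpha)$ (from the second-derivative right-hand side) and $+\tilde{\dot{x}}_i\tilde{\dot{s}}_i\sin^2\alpha$ as two separate second-order terms. The paper's proof hinges on the exact identity $-2(1-\cos\alpha)+\sin^2\alpha = -(1-\cos\alpha)^2$, so that $\tilde{\dot{x}}_i\tilde{\dot{s}}_i$ actually appears multiplied by $(1-\cos\alpha)^2 \le \sin^4\alpha$, i.e.\ a fourth-order coefficient, not second-order. Without this cancellation, the correct infeasible bound $\abs{\tilde{\dot{x}}_i\tilde{\dot{s}}_i} \le C_2^2 n^2\mu_k$ would give a $G_i^k$ remainder of order $n^2\mu_k\sin^2\alpha$; balancing against the first-order term $O(\mu_k\sin\alpha)$ would force $\sin\hat\alpha = O(n^{-2})$, worse than claimed. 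With the cancellation, this term contributes $n^2\mu_k\sin^4\alpha$ and is harmless; the true binding term then becomes the cross-product $\abs{\tilde{\ddot{x}}_i\tilde{\dot{s}}_i} \le C_2 C_4 n^3\mu_k$ times $\sin\alpha(1-\cos\alpha) \le \sin^3\alpha$, which gives exactly $\sin\hat\alpha = O(n^{-1.5})$. This is also why your final claimed reduction to ``$(\text{const})\cdot\mu_k\sin\alpha - (\text{const})\cdot n^{1.5}\mu_k\sin^2\alpha \ge 0$'' does not follow from your own bounds: with $O(n\mu_k)$ products and separate $\sin^2\alpha$ terms, the coefficient would be $O(n)$, not $O(n^{1.5})$, and the inequality would deliver $\sin\hat\alpha = O(1/n)$, inconsistent with the statement you are trying to prove.
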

The proof of Proposition~\ref{proposition_lower_bound_of_step_size} will be given later.
For this proof,
we first evaluate $x^k$ and $s^k$ with the $\ell_1$ norm.
\begin{lemma}
  \label{lemma_upper_nu_x_s}
  There is a positive constant $C_1$ such that
  \begin{equation}
    \nu_k \norm{(x^k, s^k)}_1 \le C_1 n \mu_k.
    \label{upper_bound_norm_x_s}
  \end{equation}
\end{lemma}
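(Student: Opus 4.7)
The plan is to exploit the infeasible-start structure by comparing $(x^k, s^k)$ with an affine combination of the initial point $(x^0, s^0)$ and the interior feasible point $(\bar{x}, \bar{s})$ guaranteed by Assumption~\ref{assumption_IPC}, and then extract an orthogonality identity from the scaled residual formulas \eqref{residuals_decreasing}.

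Concretely, I would set $u := x^k - \nu_k x^0 - (1-\nu_k)\bar{x}$, $v := s^k - \nu_k s^0 - (1-\nu_k)\bar{s}$, and $w := y^k - \nu_k y^0 - (1-\nu_k)\bar{y}$. From $A x^k - b = \nu_k (A x^0 - b)$ and $A \bar{x} = b$ one obtains $A u = 0$, and the analogous computation using $A^\top \bar{y} + \bar{s} = c$ yields $A^\top w + v = 0$. Hence $u^\top v = -(A u)^\top w = 0$, and expanding this identity and rearranging the positive and negative terms gives
\[
  \nu_k[(x^0)^\top s^k + (x^k)^\top s^0] + (1-\nu_k)[\bar{x}^\top s^k + (x^k)^\top \bar{s}] = n\mu_k + (\nu_k x^0 + (1-\nu_k)\bar{x})^\top (\nu_k s^0 + (1-\nu_k)\bar{s}).
\]

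The crucial step is a clean lower bound on the left-hand side. Let $\phi := \min_i \min\{x^0_i,\bar{x}_i,s^0_i,\bar{s}_i\} > 0$. Since $(x^k, s^k) > 0$, each of the four inner products on the LHS is at least $\phi$ times the corresponding $\ell_1$ norm of $x^k$ or $s^k$; because the coefficients $\nu_k$ and $1-\nu_k$ sum to one, the four contributions combine to $\phi \norm{(x^k, s^k)}_1$. On the right-hand side, the inner product term is at most $n K'$ for a constant $K'$ depending only on the infinity norms of $x^0, s^0, \bar{x}, \bar{s}$, which are fixed. Therefore $\norm{(x^k, s^k)}_1 \le (n\mu_k + n K')/\phi$.

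To produce the factor $\nu_k$ on the left of the target bound, I would invoke the neighborhood definition \eqref{def_neighborhood} together with \eqref{residuals_decreasing}: unless the initial residual is zero (in which case $\nu_k$ does not enter nontrivially and the claim is immediate), these together yield $\nu_k \le \gamma_2 \mu_k/\mu_0$. Multiplying the previous estimate by $\nu_k$ then gives
\[
  \nu_k \norm{(x^k, s^k)}_1 \le \frac{n}{\phi}\left(\mu_k + \frac{\gamma_2 K'}{\mu_0}\mu_k\right) = C_1 n \mu_k
\]
with $C_1 := (1 + \gamma_2 K'/\mu_0)/\phi$. The main obstacle is recognizing that the $\nu_k$ and $1-\nu_k$ contributions balance exactly to give the $\nu_k$-free lower bound $\phi \norm{(x^k, s^k)}_1$; a naive bound using only one of the two families of terms degrades to $\min(\nu_k, 1-\nu_k)\,\phi\,\norm{(x^k, s^k)}_1$, which is too weak either when the iterate is still near the initial point or when $\nu_k$ becomes small.
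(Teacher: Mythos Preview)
Your argument is correct and follows the same orthogonality template as the paper, but with a different auxiliary point: the paper compares $(x^k,s^k)$ with the convex combination $\nu_k(x^0,s^0)+(1-\nu_k)(x^*,s^*)$ using an optimal solution, whereas you use $\nu_k(x^0,s^0)+(1-\nu_k)(\bar{x},\bar{s})$ with the strictly interior feasible point from Assumption~\ref{assumption_IPC}. In the paper the $(1-\nu_k)$ cross terms $(x^*)^\top s^k+(s^*)^\top x^k$ are simply discarded by nonnegativity, leaving $\nu_k[(x^0)^\top s^k+(s^0)^\top x^k]$ on the left, which is then bounded below by $\nu_k\xi\norm{(x^k,s^k)}_1$ with $\xi=\min_i\min(x^0_i,s^0_i)$; the right-hand side is controlled via $(x^*)^\top s^*=0$ and $\mu_k\ge\nu_k\mu_0$. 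Your route instead keeps the $(1-\nu_k)$ cross terms and exploits $\bar{x},\bar{s}>0$ so that the $\nu_k$ and $1-\nu_k$ contributions recombine into $\phi\norm{(x^k,s^k)}_1$; this actually produces the stronger intermediate fact that $\norm{(x^k,s^k)}_1$ is uniformly $O(n)$, which you then degrade to the stated lemma by multiplying by $\nu_k\le\gamma_2\mu_k/\mu_0$. One small point: your dismissal of the zero-initial-residual case is a bit loose; the cleanest fix is to observe that the step rule $g^k(\alpha_k)\ge 0$ already forces $\mu_{k+1}\ge(1-\sin\alpha_k)\mu_k$, hence $\mu_k\ge\nu_k\mu_0$ unconditionally, which covers both cases at once (and, incidentally, is what the paper's inequality \eqref{mu_decreasing_lower_bound} really needs as well).
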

The proof below is based on the proof of \cite[Lemma~6.3]{wright1997primal}.

\begin{proof}
  From the definition of $\Neighborhood(\gamma_1, \gamma_2)$ in \eqref{def_neighborhood},
  we know
  $$
    \frac{\norm{(r_b(x^k), r_c(y^k, s^k))}}{\mu_k} \le \gamma_2 \frac{\norm{(r_b(x^0), r_c(y^0, s^0))}}{\mu_0},
  $$
  which implies
  \begin{equation}
    \mu_k \ge \frac{\norm{(r_b(x^k), r_c(y^k, s^k))}}{\gamma_2 \norm{(r_b(x^0), r_c(y^0, s^0))}} \mu_0 = \frac{\nu_k}{\gamma_2} \mu_0
    \label{mu_decreasing_lower_bound}
  \end{equation}
  from \eqref{residuals_decreasing} and $\gamma_2 \ge 1$.
  When we set
  $$
    (\bar{x}, \bar{y}, \bar{s}) = \nu_k (x^0, y^0, s^0) + (1 - \nu_k) (x^*, y^*, s^*) - (x^k, y^k, s^k),
  $$
  we have $A \bar{x} = 0$ and $A^\top \bar{y} + \bar{s} = 0$ from \eqref{residuals_decreasing} and \eqref{KKT_conditions},
  then
  \begin{align*}
    0 = & \ \bar{x}^\top \bar{s}                                                                                                                    \\
    =   & \ (\nu_k x^0 + (1 - \nu_k) x^* - x^k)^\top (\nu_k s^0 + (1 - \nu_k) s^* - s^k)                                                            \\
    =   & \ \nu_k^2 (x^0)^\top s^0 + \nu_k (1 - \nu_k) \left((x^0)^\top s^* + (x^*)^\top s^0\right) + (x^k)^\top s^k + (1 - \nu_k)^2 (x^*)^\top s^* \\
        & \ \ - \left(\nu_k ((x^0)^\top s^k + (s^0)^\top x^k) + (1 - \nu_k) ((x^k)^\top s^* + (s^k)^\top x^*)\right)
  \end{align*}
  is satisfied.
  Since all the components of $x^k, s^k, x^*, s^*$ are nonnegative,
  we have $((x^k)^\top s^* + (s^k)^\top x^*) \ge 0$.
  In addition, we have $(x^*)^\top s^* = 0$ from \eqref{KKT_conditions}.
  By using these and rearranging,
  we obtain
  \begin{align}
    \nu_k ((x^0)^\top s^k + (s^0)^\top x^k)
     & \le \nu_k^2 (x^0)^\top s^0 + \nu_k (1 - \nu_k) \left((x^0)^\top s^* + (x^*)^\top s^0\right) + (x^k)^\top s^k \notag                 \\
    [\because \eqref{def_mu}] \quad
     & = \nu_k^2 n \mu_0 + \nu_k (1 - \nu_k) \left((x^0)^\top s^* + (x^*)^\top s^0\right) + n \mu_k \notag                                 \\
    [\because \eqref{mu_decreasing_lower_bound}] \quad
     & \le \nu_k \gamma_2 n \mu_k + \gamma_2 (1 - \nu_k) \frac{\left((x^0)^\top s^* + (x^*)^\top s^0\right)}{\mu_0} \mu_k + n \mu_k \notag \\
    [\because \nu_k \in [0, 1]] \quad
     & \le (1 + \gamma_2) n \mu_k + \gamma_2 \frac{\left((x^0)^\top s^* + (x^*)^\top s^0\right)}{\mu_0} \mu_k.
    \label{x_s_upper}
  \end{align}
  Defining a constant $\xi$ by
  \begin{equation}
    \xi = \min_{i=1,2,\dots,n} \min(x_i^0, s_i^0) > 0,
    \label{def_xi}
  \end{equation}
  we have $(x^0)^\top s^k + (s^0)^\top x^k \ge \xi \norm{(x^k, s^k)}_1$.
  Therefore,
  from \eqref{x_s_upper},
  we obtain
  $$
    \nu_k \norm{(x^k, s^k)}_1 \le \xi^\inverse \left(1 + \gamma_2 + \gamma_2 \frac{(x^0)^\top s^* + (x^*)^\top s^0}{(x^0)^\top s^0}\right) n \mu_k.
  $$

  From \eqref{def_omega} and \eqref{def_initial_point},
  we have
  \begin{align*}
    \xi = \omega, \quad
    (x^0)^\top s^* + (x^*)^\top s^0  \le \norm{s^*}_\infty n \omega + \norm{x^*}_\infty n \omega = 2 n \omega^2, \quad
    (x^0)^\top s^0 = n \omega^2.
  \end{align*}
  We complete this proof by settings
  \begin{equation}
    C_1 = \omega^\inverse \left(1 + \gamma_2 + \gamma_2 \frac{2 n \omega^2}{n \omega^2}\right) = \frac{1 + 3 \gamma_2}{\omega}
    \label{def_C1}
  \end{equation}
  in \eqref{upper_bound_norm_x_s}.
  Since $\omega \ge 1$ in Assumption~\ref{assumption_opt_sol_bounded}, we have $C_1 \le 1 + 3 \gamma_2$,
  which implies an upper bound $1+3 \gamma_2$ is independent of $n$.
\end{proof}

For the proof of Lemma~\ref{lemma_first_derivative_upper},
the following lemma evaluates $\norm{S^k v_i^k}_\infty$.
\begin{lemma}[{\cite[Lemma~4.2]{mohammadisiahroudi2024efficient}}]
  \label{lemma_upper_derivatives_residual}
  For the derivatives $(\tilde{\dot{x}}, \tilde{\dot{y}}, \tilde{\dot{s}})$ and $(\tilde{\ddot{x}}, \tilde{\ddot{y}}, \tilde{\ddot{s}})$,
  when the residuals $\hat{r}_i^k$ satisfy \eqref{def_upper_derivatives_residual_MNES},
  it holds that
  \begin{equation}
    \norm{S^k v_i^k}_\infty \le \eta \mu_k.
    \label{upper_residual_term_MNES}
  \end{equation}
\end{lemma}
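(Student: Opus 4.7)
The plan is to unpack $S^k v_i^k$ componentwise and apply the given residual bound directly, so no serious obstacle is expected. The key observation is that $v_i^k$ is padded with zeros outside the basis $\hat{B}$, and on $\hat{B}$ it has the form $D_{\hat{B}}^k \hat{r}_i^k$, so multiplying by $S^k$ yields a very clean expression.

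First I would split the index set into $\hat{B}$ and its complement $\hat{N}$. For $j \in \hat{N}$, the definition $v_i^k = (D_{\hat{B}}^k \hat{r}_i^k, 0)$ gives $(S^k v_i^k)_j = 0$, so those components contribute nothing to the $\infty$-norm. For $j \in \hat{B}$, using $D_{\hat{B}}^k = (X_{\hat{B}}^k)^{1/2}(S_{\hat{B}}^k)^{-1/2}$, I would simplify
\begin{equation*}
  S_{\hat{B}}^k D_{\hat{B}}^k = (X_{\hat{B}}^k S_{\hat{B}}^k)^{1/2},
\end{equation*}
which gives $(S^k v_i^k)_j = (x_j^k s_j^k)^{1/2} (\hat{r}_i^k)_j$ for $j \in \hat{B}$.

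Next I would control each factor. Since every $x_j^k s_j^k$ is nonnegative and $\sum_{\ell=1}^n x_\ell^k s_\ell^k = n \mu_k$, we have $x_j^k s_j^k \le n \mu_k$, hence $(x_j^k s_j^k)^{1/2} \le \sqrt{n \mu_k}$. For the other factor, the componentwise bound $|(\hat{r}_i^k)_j| \le \|\hat{r}_i^k\|$ together with the assumed accuracy \eqref{def_upper_derivatives_residual_MNES} gives $|(\hat{r}_i^k)_j| \le \eta \sqrt{\mu_k}/\sqrt{n}$. Multiplying the two bounds yields
\begin{equation*}
  |(S^k v_i^k)_j| \le \sqrt{n \mu_k} \cdot \eta \frac{\sqrt{\mu_k}}{\sqrt{n}} = \eta \mu_k,
\end{equation*}
uniformly in $j$, which proves \eqref{upper_residual_term_MNES}. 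The argument is identical for $i = 1$ and $i = 2$ since only the defining relation for $v_i^k$ and the bound on $\hat{r}_i^k$ are used, not the specific right-hand sides of \eqref{first_derivative_MNES} or \eqref{second_derivative_MNES}.
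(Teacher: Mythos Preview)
Your argument is correct: the key identity $S_{\hat{B}}^k D_{\hat{B}}^k = (X_{\hat{B}}^k S_{\hat{B}}^k)^{1/2}$ together with the elementary bound $x_j^k s_j^k \le n\mu_k$ and the residual assumption \eqref{def_upper_derivatives_residual_MNES} gives exactly the claimed $\infty$-norm estimate. The paper itself does not supply a proof here---it simply quotes the result from \cite[Lemma~4.2]{mohammadisiahroudi2022efficient}---and your derivation is the standard one underlying that citation.
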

Then,
the following lemma holds similarly to \cite[Lemma~6.5]{wright1997primal} and \cite[Lemma~4.6]{mohammadisiahroudi2024efficient}.
\begin{lemma}
  There is a positive constant $C_2$ such that
  $$
    \max\left\{\norm{(D^k)^\inverse \tilde{\dot{x}}}, \norm{D^k \tilde{\dot{s}}}\right\} \le C_2 n \sqrt{\mu_k}
  $$
  \label{lemma_first_derivative_upper}
\end{lemma}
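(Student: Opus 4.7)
The plan is to follow the classical strategy of Wright~\cite[Lemma~6.5]{wright1997primal} adapted to the infeasible inexact setting, as in Mohammadisiahroudi et al.~\cite[Lemma~4.6]{mohammadisiahroudi2022efficient}. The idea is to split $\tilde{\dot{x}}$ and $\tilde{\dot{s}}$ into a ``homogeneous'' part satisfying $Ap = 0$ and $q\in \mathrm{Range}(A^\top)^\perp$ type relations, plus a correction that absorbs the infeasibility.

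First, I would invoke Assumption~\ref{assumption_IPC} to fix an interior feasible point $(\bar{x}, \bar{y}, \bar{s})$ and define
\begin{equation*}
  p := \tilde{\dot{x}} - \nu_k(x^0 - \bar{x}), \qquad q := \tilde{\dot{s}} - \nu_k(s^0 - \bar{s}).
\end{equation*}
Using $A\bar{x}=b$, $A^\top\bar{y}+\bar{s}=c$, together with \eqref{inexact_first_derivative_MNES_main_residual}, \eqref{inexact_first_derivative_MNES_dual_residual} and the residual-decay identities \eqref{residuals_decreasing}, I would verify that $Ap=0$ and $q = -A^\top(\tilde{\dot{y}}-\nu_k(y^0-\bar{y}))$, which immediately gives the orthogonality $p^\top q = 0$. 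The choice of the auxiliary vector with the factor $\nu_k$ (rather than the more naive $x^k-\bar{x}$) is crucial: it is what ultimately lets Lemma~\ref{lemma_upper_nu_x_s} cancel the $1/\nu_k$ that would otherwise blow up.

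Next, setting $u=(D^k)^{-1}\tilde{\dot{x}}$, $v=D^k\tilde{\dot{s}}$, and premultiplying \eqref{inexact_first_derivative_MNES_duality} by $(X^k S^k)^{-1/2}$ yields the key identity
\begin{equation*}
  u + v = (X^k S^k)^{1/2} e - \sigma\mu_k (X^k S^k)^{-1/2} e - (X^k)^{-1/2}(S^k)^{1/2} v_1^k.
\end{equation*}
I would bound each term by $O(\sqrt{n\mu_k})$: the first has squared norm exactly $n\mu_k$; the second and third use $x_i^k s_i^k \ge \gamma_1\mu_k$ (neighborhood condition) together with $\|S^k v_1^k\|_\infty \le \eta\mu_k$ from Lemma~\ref{lemma_upper_derivatives_residual}, giving $\|u+v\| \le C'\sqrt{n\mu_k}$ for an explicit constant $C'$ depending only on $\sigma,\eta,\gamma_1$.

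Letting $\tilde{u}=(D^k)^{-1}p$, $\tilde{v}=D^k q$, orthogonality $p^\top q = 0$ transfers to $\tilde{u}^\top\tilde{v}=0$, so $\max(\|\tilde{u}\|,\|\tilde{v}\|) \le \|\tilde{u}+\tilde{v}\|$. By the triangle inequality,
\begin{equation*}
  \|\tilde{u}+\tilde{v}\| \le \|u+v\| + \|(D^k)^{-1}\nu_k(x^0-\bar{x})\| + \|D^k\nu_k(s^0-\bar{s})\|.
\end{equation*}
The hard part, and the main obstacle, is controlling these last two ``correction'' norms uniformly in $k$. I would bound the first as
\begin{equation*}
  \|(D^k)^{-1}\nu_k(x^0-\bar{x})\|^2 \le \nu_k^2 \|x^0-\bar{x}\|_\infty^2 \sum_i \frac{(s_i^k)^2}{x_i^k s_i^k} \le \frac{\|x^0-\bar{x}\|_\infty^2 (\nu_k\|s^k\|_1)^2}{\gamma_1 \mu_k},
\end{equation*}
and then apply Lemma~\ref{lemma_upper_nu_x_s} to replace $\nu_k\|s^k\|_1$ by $C_1 n\mu_k$, obtaining the clean bound $O(n\sqrt{\mu_k})$. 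The symmetric argument with $\nu_k\|x^k\|_1$ handles the $D^k\nu_k(s^0-\bar{s})$ term. Finally, $\|u\|\le \|\tilde{u}\| + \|(D^k)^{-1}\nu_k(x^0-\bar{x})\|$ and the analogous inequality for $\|v\|$ combine all bounds into a single constant $C_2$, independent of $k$ and $n$, yielding the conclusion.
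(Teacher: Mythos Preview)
Your proposal is correct and follows essentially the same classical Wright-type argument as the paper. The only cosmetic difference is that the paper subtracts $\nu_k(x^0-x^*,\,s^0-s^*)$ using an optimal solution $(x^*,y^*,s^*)$, whereas you subtract $\nu_k(x^0-\bar{x},\,s^0-\bar{s})$ using the strictly feasible point from Assumption~\ref{assumption_IPC}; since both choices satisfy the equality constraints, the orthogonality step and the subsequent use of Lemma~\ref{lemma_upper_nu_x_s} go through identically.
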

\begin{proof}
  Let
  $$
    (\bar{x}, \bar{y}, \bar{s}) = (\tilde{\dot{x}}, \tilde{\dot{y}}, \tilde{\dot{s}}) - \nu_k (x^0, y^0, s^0) + \nu_k (x^*, y^*, s^*).
  $$
  From \eqref{inexact_first_derivative_MNES_main_residual},
  \eqref{inexact_first_derivative_MNES_dual_residual},
  \eqref{residuals_decreasing} and \eqref{KKT_conditions},
  we have $A \bar{x} = 0$ and $A^\top \bar{y} + \bar{s} = 0$,
  therefore, $\bar{x}^\top \bar{s} = 0$.
  Thus, we obtain
  \begin{equation}
    \norm{(D^k)^\inverse \bar{x} + D^k \bar{s}}^2 = \norm{(D^k)^\inverse (\tilde{\dot{x}} - \nu_k (x^0 - x^*))}^2 + \norm{D^k (\tilde{\dot{s}} - \nu_k (s^0 - s^*))}^2.
    \label{eq_norm_D_inv_bar_x_plus_D_bar_s}
  \end{equation}
  From \eqref{inexact_first_derivative_MNES_duality}, it holds that
  \begin{align*}
    S^k \bar{x} + X^k \bar{s} & = (S^k \tilde{\dot{x}} + X^k \tilde{\dot{s}}) - \nu_k S^k (x^0 - x^*) - \nu_k X^k (s^0 - s^*) \\
                              & = (X^k s^k - \sigma \mu_k e - S^k v_1^k) - \nu_k S^k (x^0 - x^*) - \nu_k X^k (s^0 - s^*).
  \end{align*}
  Consequently,
  we verify
  \begin{equation}
    (D^k)^\inverse \bar{x} + D^k \bar{s} = (X^k S^k)^{-\frac{1}{2}} (X^k s^k - \sigma \mu_k e - S^k v_1^k) - \nu_k (D^k)^\inverse (x^0 - x^*) - \nu_k D^k (s^0 - s^*).
    \label{eq_D_inv_bar_x_plus_D_bar_s}
  \end{equation}
  For any vector $a \in \Real^d$,
  \begin{equation}
    \norm{a}_1 \le \sqrt{n} \norm{a} \le n \norm{a}_\infty
    \label{inequality_norms}
  \end{equation}
  holds from \cite[Lemma~3.1]{yang2020arc}.
  From \eqref{eq_norm_D_inv_bar_x_plus_D_bar_s}, \eqref{eq_D_inv_bar_x_plus_D_bar_s},
  \eqref{inequality_norms} and Lemma~\ref{lemma_upper_derivatives_residual},
  we obtain
  \begin{align}
     & \norm{(D^k)^\inverse (\tilde{\dot{x}} - \nu_k (x^0 - x^*))}^2 + \norm{D^k (\tilde{\dot{s}} - \nu_k (s^0 - s^*))}^2 \notag                                                                               \\
     & = \norm{(X^k S^k)^{-\frac{1}{2}} (X^k s^k - \sigma \mu_k e - S^k v_1^k) - \nu_k (D^k)^\inverse (x^0 - x^*) - \nu_k D^k (s^0 - s^*)}^2 \notag                                                            \\
     & \le \left\{\norm{X^k S^k}^{-\frac{1}{2}} \left(\norm{X^k s^k - \sigma \mu_k e} + \norm{S^k v_1^k}\right) + \nu_k \norm{(D^k)^\inverse (x^0 - x^*)} + \nu_k \norm{D^k (s^0 - s^*)}\right\}^2 \notag      \\
     & \le \left\{\norm{X^k S^k}^{-\frac{1}{2}} \left(\norm{X^k s^k - \sigma \mu_k e} + \sqrt{n} \eta \mu_k \right) + \nu_k \left(\norm{(D^k)^\inverse (x^0 - x^*)} + \norm{D^k (s^0 - s^*)}\right)\right\}^2.
    \label{upper_sum_of_norm_of_D_inv_bar_x_plus_D_bar_s}
  \end{align}
  In addition, $x^k_i s^k_i \ge \gamma \mu_k$ in \eqref{def_neighborhood} implies
  \begin{equation}
    \norm{X^k S^k}^{-\frac{1}{2}} \le \frac{1}{\sqrt{\gamma_1 \mu_k}} \label{upper_x_s_half_inverse}.
  \end{equation}
  From \eqref{upper_bound_norm_x_s} and \eqref{upper_x_s_half_inverse}, we have
  \begin{equation}
    \nu_k \norm{(x^k, s^k)}_1 \norm{(X S)^{-1 / 2}}
    \le \frac{C_1 n \sqrt{\mu_k}}{\sqrt{\gamma_1}}.
    \label{upper_xs_XS_half_inv}
  \end{equation}
  According to the derivation in \cite[Lemma~6.5]{wright1997primal},
  we have
  \begin{align}
     & \norm{X^k s^k - \sigma \mu_k e} \le n \mu_k, \label{upper_X_s}                                            \\
     & \nu_k \left(\norm{(D^k)^\inverse (x^0-x^*)} + \norm{D^k (s^0-s^*)}\right) \notag                          \\
     & \ \le \nu_k \norm{(x^k, s^k)}_1 \norm{(X S)^{-1 / 2}} \max \left\{\norm{x^0-x^*}, \norm{s^0-s^*}\right\}.
    \label{upper_nu_k_norm}
  \end{align}
  Therefore,
  from \eqref{upper_nu_k_norm} and \eqref{upper_xs_XS_half_inv},
  we obtain
  \begin{align}
     & \nu_k \left(\norm{(D^k)^\inverse (x^0-x^*)} + \norm{D^k (s^0-s^*)}\right) \notag                      \\
     & \ \le \frac{C_1}{\sqrt{\gamma_1}} n \sqrt{\mu_k } \max \left\{\norm{x^0-x^*}, \norm{s^0-s^*}\right\}.
    \label{upper_sum_of_norm_of_D_xs_0_minus_xs_star}
  \end{align}
  Therefore,
  we have
  \begin{align*}
    \norm{(D^k)^\inverse \tilde{\dot{x}}}
    \le & \ \norm{(D^k)^\inverse (\tilde{\dot{x}} - \nu_k (x^0 - x^*))} + \nu_k \norm{(D^k)^\inverse (x^0 - x^*)}                                                                 \\
    [\because \eqref{upper_sum_of_norm_of_D_inv_bar_x_plus_D_bar_s}] \quad
    \le & \ \norm{X^k S^k}^{-\frac{1}{2}} \left(\norm{X^k s^k - \sigma \mu_k e} + \sqrt{n} \eta \mu_k \right)                                                                     \\
        & + 2 \nu_k \left(\norm{(D^k)^\inverse (x^0 - x^*)} + \norm{D^k (s^0 - s^*)}\right)                                                                                       \\
    [\because \eqref{upper_x_s_half_inverse}, \eqref{upper_X_s}] \quad
    \le & \ \frac{\sqrt{\mu_k}}{\sqrt{\gamma_1}} \left(n + \sqrt{n} \eta \right)+ 2 \nu_k \left(\norm{(D^k)^\inverse (x^0 - x^*)} + \norm{D^k (s^0 - s^*)}\right)                 \\
    [\because \eqref{upper_sum_of_norm_of_D_xs_0_minus_xs_star}] \quad
    \le & \ \frac{\sqrt{\mu_k}}{\sqrt{\gamma_1}} \left(n + \sqrt{n} \eta \right)+ \frac{2 C_1 n \sqrt{\mu_k}}{\sqrt{\gamma_1}} \max \left\{\norm{x^0-x^*}, \norm{s^0-s^*}\right\} \\
    \le & \ \frac{1}{\sqrt{\gamma_1}} \left(1 + \eta + 2 C_1 \max \left\{\norm{x^0-x^*}, \norm{s^0-s^*}\right\}\right) n \sqrt{\mu_k}.
  \end{align*}
  Since the optimal set is bounded from Assumption~\ref{assumption_opt_sol_bounded},
  \begin{equation}
    C_2 := \gamma_1^{-1/2} \left(1 + \eta + 2 C_1 \max \left\{\norm{x^0-x^*}, \norm{s^0-s^*}\right\}\right)
    \label{def_C2}
  \end{equation}
  is also bounded,
  and we can prove this lemma by setting this $C_2$.
  We can similarly show $\tilde{\dot{s}} \le C_2 n \sqrt{\mu_k}$.
\end{proof}

From Lemma~\ref{lemma_first_derivative_upper}, it holds that
\begin{equation}
  \norm{\tilde{\dot{x}} \circ \tilde{\dot{s}}} \le \norm{(D^k)^\inverse \tilde{\dot{x}}} \norm{D^k \tilde{\dot{s}}} \le C_2^2 n^2 \mu_k.
  \label{upper_first_derivative_Hadamard}
\end{equation}
Similarly,
we evaluate the terms related to $G_i^k(\alpha)$, $g^k(\alpha)$ and $h^k(\alpha)$.

\begin{lemma}
  \label{lemma_upper_of_first_and_second_derivatives}
  There are positive constants $C_3$ and $C_4$ such that
  \begin{align*}
    \norm{\tilde{\ddot{x}} \circ \tilde{\ddot{s}}}                                                                   & \le C_3 n^4 \mu_k,        \\
    \max\left\{\norm{(D^k)^\inverse \tilde{\ddot{x}}}, \norm{D^k \tilde{\ddot{s}}}\right\}                           & \le C_4 n^2 \sqrt{\mu_k}, \\
    \max \left\{\norm{\tilde{\ddot{x}} \circ \tilde{\dot{s}}}, \norm{\tilde{\dot{x}} \circ \tilde{\ddot{s}}}\right\} & \le C_2 C_4 n^3 \mu_k.
  \end{align*}
\end{lemma}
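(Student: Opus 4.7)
The plan is to mimic the argument used for Lemma~\ref{lemma_first_derivative_upper}, but now applied to the second-derivative system \eqref{indexact_second_derivative_conditions}, whose right-hand side is driven by $\tilde{\dot{x}}\circ\tilde{\dot{s}}$ instead of $X^k s^k - \sigma\mu_k e$. Because $\|\tilde{\dot{x}}\circ\tilde{\dot{s}}\|$ scales like $n^2\mu_k$ by \eqref{upper_first_derivative_Hadamard} (one power of $n$ worse than $\|X^k s^k - \sigma\mu_k e\|$), we expect an extra factor of $n$ in the bound on $\tilde{\ddot{x}},\tilde{\ddot{s}}$, which then cascades to the stated $n^4$ and $n^3$ rates.

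More concretely, I would first exploit \eqref{inexact_second_derivative_main_residual} and \eqref{inexact_second_derivative_dual_residual} to obtain $A\tilde{\ddot{x}} = 0$ and $A^\top\tilde{\ddot{y}}+\tilde{\ddot{s}}=0$, which together yield the orthogonality $\tilde{\ddot{x}}^\top\tilde{\ddot{s}}=0$ and hence
\[
\norm{(D^k)^{\inverse}\tilde{\ddot{x}}}^2 + \norm{D^k\tilde{\ddot{s}}}^2 = \norm{(D^k)^{\inverse}\tilde{\ddot{x}} + D^k\tilde{\ddot{s}}}^2 .
\]
Dividing \eqref{inexact_second_derivative_duality} by $\sqrt{X^k S^k}$ gives
\[
(D^k)^{\inverse}\tilde{\ddot{x}} + D^k\tilde{\ddot{s}} = -(X^k S^k)^{-1/2}\bigl(2\,\tilde{\dot{x}}\circ\tilde{\dot{s}} + S^k v_2^k\bigr),
\]
and the triangle inequality together with $\norm{(X^kS^k)^{-1/2}} \le 1/\sqrt{\gamma_1\mu_k}$ from \eqref{upper_x_s_half_inverse} reduces the task to bounding the two pieces on the right. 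The first is bounded by $2 C_2^2 n^2 \mu_k$ via \eqref{upper_first_derivative_Hadamard}, and the second by $\sqrt{n}\,\eta\,\mu_k$ using $\norm{S^k v_2^k} \le \sqrt{n}\,\norm{S^k v_2^k}_\infty$ together with Lemma~\ref{lemma_upper_derivatives_residual}. Combining these yields
\[
\max\bigl\{\norm{(D^k)^{\inverse}\tilde{\ddot{x}}},\,\norm{D^k\tilde{\ddot{s}}}\bigr\} \le \frac{2 C_2^2 n^2 + \sqrt{n}\,\eta}{\sqrt{\gamma_1}} \sqrt{\mu_k},
\]
so any constant $C_4$ dominating $(2 C_2^2 + \eta)/\sqrt{\gamma_1}$ (absorbing lower-order powers of $n$) works; note that no $(x^0,y^0,s^0)$-dependent terms appear here because the residuals $r_b,r_c$ do not enter the second-derivative system.

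Given the bound on $(D^k)^{\inverse}\tilde{\ddot{x}}$ and $D^k\tilde{\ddot{s}}$, the two remaining inequalities are immediate from Cauchy-Schwarz applied componentwise:
\[
\norm{\tilde{\ddot{x}}\circ\tilde{\ddot{s}}} \le \norm{(D^k)^{\inverse}\tilde{\ddot{x}}}\,\norm{D^k\tilde{\ddot{s}}} \le C_4^2\, n^4\mu_k,
\]
so $C_3 := C_4^2$ works; and
\[
\norm{\tilde{\ddot{x}}\circ\tilde{\dot{s}}} \le \norm{(D^k)^{\inverse}\tilde{\ddot{x}}}\,\norm{D^k\tilde{\dot{s}}}, \qquad \norm{\tilde{\dot{x}}\circ\tilde{\ddot{s}}} \le \norm{(D^k)^{\inverse}\tilde{\dot{x}}}\,\norm{D^k\tilde{\ddot{s}}},
\]
each bounded by $C_2 C_4 n^3\mu_k$ using Lemma~\ref{lemma_first_derivative_upper} and the just-obtained bound on the second derivatives.

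The main obstacle is really just the bookkeeping: making sure the inexactness term $S^k v_2^k$ is handled with the correct norm inequality $\norm{S^k v_2^k} \le \sqrt{n}\,\eta\mu_k$ rather than a looser $n\,\eta\mu_k$, since the latter would not yield tighter constants but would obscure the dependence, and tracking that the second-derivative system, unlike the first, has zero residuals in its first two block rows, so no $(x^0-x^*,s^0-s^*)$ terms appear and the bound depends only on $C_2$, $\eta$, and $\gamma_1$.
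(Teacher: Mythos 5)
Your proof is correct and follows essentially the same approach as the paper: both exploit the homogeneous block rows \eqref{inexact_second_derivative_main_residual}--\eqref{inexact_second_derivative_dual_residual} to get $\tilde{\ddot{x}}^\top\tilde{\ddot{s}}=0$, bound $(D^k)^\inverse\tilde{\ddot{x}} + D^k\tilde{\ddot{s}}$ from the scaled duality equation \eqref{inexact_second_derivative_duality} using \eqref{upper_x_s_half_inverse}, \eqref{upper_first_derivative_Hadamard}, and Lemma~\ref{lemma_upper_derivatives_residual}, and then pass to the individual norms and Hadamard products via orthogonality and Cauchy--Schwarz. The only cosmetic difference is that the paper derives $C_3$ first via the inequality $\norm{u\circ v}\le 2^{-3/2}\norm{u+v}^2$ of \cite[Lemma~5.3]{wright1997primal} (yielding $C_3 = (2C_2^2+\eta)^2/(2^{3/2}\gamma_1)$, slightly tighter than your $C_3 := C_4^2$), whereas you derive $C_4$ first and set $C_3 := C_4^2$; either choice gives the stated $\order{n^4\mu_k}$ rate.
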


\begin{proof}
  When $u^\top v \ge 0$ for any vector pairs of $u, v$, the inequality
  $$
    \norm{u \circ v} \leq 2^{-\frac{3}{2}}\norm{u + v}^2
  $$
  holds from \citet[Lemma~5.3]{wright1997primal},
  so the following is satisfied:
  $$
    \norm{\tilde{\ddot{x}} \circ \tilde{\ddot{s}}}
    = \norm{(D^k)^\inverse \tilde{\ddot{x}} \circ D^k \tilde{\ddot{s}}}
    \le 2^{-\frac{3}{2}} \norm{(D^k)^\inverse \tilde{\ddot{x}} + D^k \tilde{\ddot{s}}}^2.
  $$
  From $(D^k)^\inverse \tilde{\ddot{x}} + D^k \tilde{\ddot{s}} = (X^k S^k)^{-1/2} (S^k \tilde{\ddot{x}} + X^k \tilde{\ddot{s}})$,
  \begin{align}
    \norm{(D^k)^\inverse \tilde{\ddot{x}} + D^k \tilde{\ddot{s}}}
     & \le \norm{X^k S^k}^{-\frac{1}{2}} \norm{S^k \tilde{\ddot{x}} + X^k \tilde{\ddot{s}}} \notag                             \\
    [\because \eqref{inexact_second_derivative_duality}] \quad
     & \le \norm{X^k S^k}^{-\frac{1}{2}} \left(2 \norm{\tilde{\dot{x}} \circ \tilde{\dot{s}}} + \norm{S^k v_2^k}\right) \notag \\
    [\because \eqref{upper_x_s_half_inverse}, \eqref{upper_first_derivative_Hadamard}, \eqref{upper_residual_term_MNES}, \eqref{inequality_norms}] \quad
     & \le \frac{1}{\sqrt{\gamma_1 \mu_k}} \left(2 C_2^2 n^2 \mu_k + \sqrt{n} \eta \mu_k \right) \notag                        \\
     & \le \frac{\sqrt{\mu_k}}{\sqrt{\gamma_1}} (2 C_2^2 n^2 + \sqrt{n} \eta).
    \label{upper_norm_D_inv_ddot_x_plus_D_ddot_s}
  \end{align}
  From the above,
  we can obtain
  $$
    \norm{\tilde{\ddot{x}} \circ \tilde{\ddot{s}}}
    \le 2^{-\frac{3}{2}} \frac{\mu_k}{\gamma_1} (2 C_2^2 n^2 + \sqrt{n} \eta)^2
    \le \frac{(2 C_2^2 + \eta)^2}{2^{\frac{3}{2}} \gamma_1} n^4 \mu_k
    =: C_3 n^4 \mu_k.
  $$
  From \eqref{inexact_second_derivative_main_residual} and \eqref{inexact_second_derivative_dual_residual},
  we know
  \begin{equation}
    \tilde{\ddot{x}}^\top \tilde{\dot{s}} = 0,
    \label{second_derivative_x_s_zero_inner_product}
  \end{equation}
  then \eqref{upper_norm_D_inv_ddot_x_plus_D_ddot_s} leads to
  \begin{align*}
    \max\left\{\norm{(D^k)^\inverse \tilde{\ddot{x}}}^2, \norm{D^k \tilde{\ddot{s}}}^2\right\}
     & \le \norm{(D^k)^\inverse \tilde{\ddot{x}} + D^k \tilde{\ddot{s}}}^2    \\
     & \le \frac{\mu_k}{\gamma_1} (2 C_2^2 n^2 + \sqrt{n} \eta)^2             \\
     & \le \frac{\mu_k}{\gamma_1} (2 C_2^2 +  \eta)^2 n^4 =: C_4^2 n^4 \mu_k,
  \end{align*}
  $$
    \norm{\tilde{\ddot{x}} \circ \tilde{\dot{s}}}
    \le \norm{(D^k)^\inverse \tilde{\ddot{x}}} \norm{D^k \tilde{\dot{s}}}
    \le C_4 n^2 \sqrt{\mu_k} C_2 n \sqrt{\mu_k}
    = C_2 C_4 n^3 \mu_k.
  $$
  We can show the inequality of $\norm{\tilde{\dot{x}} \circ \tilde{\ddot{s}}}$ similarly.
\end{proof}

Using these lemmas,
we are ready to prove Proposition~\ref{proposition_lower_bound_of_step_size}.

\begin{proof}[Proof of Proposition~\ref{proposition_lower_bound_of_step_size}]
  Firstly,
  we derive the equations necessary for the proofs.
  We have the following simple identity:
  \begin{equation}
    -2(1 - \cos(\alpha)) + \sin^2(\alpha) = -(1-\cos(\alpha))^2.
    \label{sin_cos_1}
  \end{equation}
  Therefore,
  we can obtain
  \begin{align}
    x^k(\alpha) \circ s^k(\alpha)
    = & \ \left(x^k-\tilde{\dot{x}}\sin(\alpha)+\tilde{\ddot{x}}(1-\cos(\alpha))\right) \circ \left(s^k-\tilde{\dot{s}}\sin(\alpha)+\tilde{\ddot{s}}(1-\cos(\alpha))\right) \notag \\
    = & \ x^k \circ s^k - \left(x^k \circ \tilde{\dot{s}} + \tilde{\dot{x}} \circ s^k\right)\sin(\alpha)
    + \left(x^k \circ \tilde{\ddot{s}} + \tilde{\ddot{x}} \circ s^k\right)(1 - \cos(\alpha)) \notag                                                                                \\
      & \ + \tilde{\dot{x}} \circ \tilde{\dot{s}} \sin^2 (\alpha)
    - \left(\tilde{\dot{x}} \circ \tilde{\ddot{s}} + \tilde{\ddot{x}} \circ \tilde{\dot{s}}\right) \sin(\alpha) (1 - \cos(\alpha))
    + \tilde{\ddot{x}} \circ \tilde{\ddot{s}} (1 - \cos(\alpha))^2 \notag                                                                                                          \\
    [\because \eqref{inexact_first_derivative_MNES_duality}, \eqref{inexact_second_derivative_duality}] \quad =
      & \ x^k \circ s^k - (x^k \circ s^k - \sigma \mu_k e - S^k v_1^k) \sin(\alpha) + \left(-2 \tilde{\dot{x}} \circ \tilde{\dot{s}} - S^k v_2^k \right)(1 - \cos(\alpha)) \notag  \\
      & \ + \tilde{\dot{x}} \circ \tilde{\dot{s}} \sin^2 (\alpha)
    - \left(\tilde{\dot{x}} \circ \tilde{\ddot{s}} + \tilde{\ddot{x}} \circ \tilde{\dot{s}}\right) \sin(\alpha) (1 - \cos(\alpha))
    + \tilde{\ddot{x}} \circ \tilde{\ddot{s}} (1 - \cos(\alpha))^2 \notag                                                                                                          \\
    [\because \eqref{sin_cos_1}] \quad =
      & \ x^k \circ s^k (1 - \sin(\alpha)) + \sigma \mu_k \sin(\alpha) e \notag                                                                                                    \\
      & \ + \left(\tilde{\ddot{x}} \circ \tilde{\ddot{s}} - \tilde{\dot{x}} \circ \tilde{\dot{s}}\right) (1 - \cos(\alpha))^2
    - \left(\tilde{\dot{x}} \circ \tilde{\ddot{s}} + \tilde{\ddot{x}} \circ \tilde{\dot{s}}\right) \sin(\alpha) (1 - \cos(\alpha)) \notag                                          \\
      & \ + S^k v_1^k \sin(\alpha) - S^k v_2^k (1 - \cos(\alpha))
    \label{x_s_alpha_Hadamard}
  \end{align}
  and
  \begin{align}
    x^k(\alpha)^\top s^k(\alpha)
    = & \ \left(x^k-\tilde{\dot{x}}\sin(\alpha)+\tilde{\ddot{x}}(1-\cos(\alpha))\right)^\top \left(s^k-\tilde{\dot{s}}\sin(\alpha)+\tilde{\ddot{s}}(1-\cos(\alpha))\right) \notag \\
    [\because \eqref{x_s_alpha_Hadamard}, \eqref{def_mu}, \eqref{second_derivative_x_s_zero_inner_product}] \quad
    = & \ (x^k)^\top s^k \left((1 - \sin(\alpha)) + \sigma \sin(\alpha)\right) \notag                                                                                             \\
      & \ - \tilde{\dot{x}}^\top \tilde{\dot{s}} (1 - \cos(\alpha))^2
    - \left(\tilde{\dot{x}}^\top \tilde{\ddot{s}} + \tilde{\ddot{x}}^\top \tilde{\dot{s}}\right) \sin(\alpha) (1 - \cos(\alpha)) \notag                                           \\
      & \ + \sin(\alpha) \sum_{i=1}^n [S^k v_1^k]_i - (1 - \cos(\alpha)) \sum_{i=1}^n [S^k v_2^k]_i.
    \label{x_s_alpha_inner_product}
  \end{align}

  From Lemmas~\ref{lemma_first_derivative_upper} and \ref{lemma_upper_of_first_and_second_derivatives} and the Cauchy-Schwartz inequality,
  we know
  \begin{subequations}
    \begin{align}
      \abs{\tilde{\dot{x}}_i \tilde{\dot{s}}_i}, \, \abs{\tilde{\dot{x}}^\top \tilde{\dot{s}}}
                                                  & \le \norm{(D^k)^\inverse \tilde{\dot{x}}} \norm{D^k \tilde{\dot{s}}} \le C_2^2 n^2 \mu_k
      \label{upper_product_of_dot_x_and_dot_s_element_wise}                                                                                     \\
      \abs{\tilde{\ddot{x}}_i \tilde{\dot{s}}_i}, \, \abs{\tilde{\ddot{x}}^\top \tilde{\dot{s}}}
                                                  & \le \norm{(D^k)^\inverse \tilde{\ddot{x}}} \norm{D^k \tilde{\dot{s}}} \le C_2 C_4 n^3 \mu_k \\
      \abs{\tilde{\dot{x}}_i \tilde{\ddot{s}}_i}, \, \abs{\tilde{\dot{x}}^\top \tilde{\ddot{s}}}
                                                  & \le \norm{(D^k)^\inverse \tilde{\dot{x}}} \norm{D^k \tilde{\ddot{s}}} \le C_2 C_4 n^3 \mu_k \\
      \abs{\tilde{\ddot{x}}_i \tilde{\ddot{s}}_i} & \le \norm{(D^k)^\inverse \tilde{\ddot{x}}} \norm{D^k \tilde{\ddot{s}}} \le C_4^2 n^4 \mu_k
      \label{upper_product_of_ddot_x_and_ddot_s_element_wise}
    \end{align}
    \label{uppers_product_of_derivatives}
  \end{subequations}
  Here,
  $\abs{\tilde{\ddot{x}}^\top \tilde{\ddot{s}}} = 0$ holds due to \eqref{second_derivative_x_s_zero_inner_product}.
  Furthermore, we have
  \begin{equation}
    \sin^2(\alpha) = 1 - \cos^2(\alpha) \ge 1 - \cos(\alpha)
    \label{sin_square_more_than_one_minus_cos}
  \end{equation}
  from $\alpha \in (0, \pi/2]$.

  We prove that the step size $\alpha$ satisfying $g^k(\alpha) \ge 0$ is bounded away from zero.
  From \eqref{x_s_alpha_inner_product},
  \begin{align}
    x^k(\alpha)^\top s^k(\alpha)
    \ge & \ (x^k)^\top s^k \left((1 - \sin(\alpha)) + \sigma \sin(\alpha)\right) \notag                                                             \\
        & \ - \abs{\tilde{\dot{x}}^\top \tilde{\dot{s}}} (1 - \cos(\alpha))^2
    - \left(\abs{\tilde{\dot{x}}^\top \tilde{\ddot{s}}} + \abs{\tilde{\ddot{x}}^\top \tilde{\dot{s}}}\right) \sin(\alpha) (1 - \cos(\alpha)) \notag \\
        & \ - \norm{S^k v_1^k}_1 \sin(\alpha) - \norm{S^k v_2^k}_1 (1 - \cos(\alpha)) \notag                                                        \\
    [\because \eqref{inequality_norms}, \eqref{upper_residual_term_MNES}] \quad
    \ge & \ (x^k)^\top s^k \left((1 - \sin(\alpha)) + \sigma \sin(\alpha)\right) \notag                                                             \\
        & \ - \abs{\tilde{\dot{x}}^\top \tilde{\dot{s}}} (1 - \cos(\alpha))^2
    - \left(\abs{\tilde{\dot{x}}^\top \tilde{\ddot{s}}} + \abs{\tilde{\ddot{x}}^\top \tilde{\dot{s}}}\right) \sin(\alpha) (1 - \cos(\alpha)) \notag \\
        & \ - \eta n \mu_k (\sin(\alpha) + 1 - \cos(\alpha)).
    \label{lower_x_s_alpha_inner_product}
  \end{align}
  Therefore,
  \begin{align*}
    g^k(\alpha) = & \ x^k(\alpha)^\top s^k(\alpha) - (1 - \sin(\alpha))(x^k)^\top s^k                                                                 \\
    [\because \eqref{lower_x_s_alpha_inner_product}] \quad
    \ge           & \ \sigma (x^k)^\top s^k \sin(\alpha) - \eta n \mu_k \left(\sin(\alpha) + 1 -\cos(\alpha)\right)                                   \\
                  & \ - \abs{\tilde{\dot{x}}^\top \tilde{\dot{s}}} (1 - \cos(\alpha))^2
    - \left(\abs{\tilde{\dot{x}}^\top \tilde{\ddot{s}}} + \abs{\tilde{\ddot{x}}^\top \tilde{\dot{s}}}\right) \sin(\alpha) (1 - \cos(\alpha))          \\
    [\because \eqref{def_mu}, \eqref{sin_square_more_than_one_minus_cos}] \quad
    \ge           & \ \sigma n \mu_k \sin(\alpha) - \eta n \mu_k \left(\sin(\alpha) + \sin^2 (\alpha)\right)                                          \\
                  & \ - \abs{\tilde{\dot{x}}^\top \tilde{\dot{s}}} \sin^4 (\alpha)
    - \left(\abs{\tilde{\dot{x}}^\top \tilde{\ddot{s}}} + \abs{\tilde{\ddot{x}}^\top \tilde{\dot{s}}}\right) \sin^3(\alpha)                           \\
    [\because \eqref{uppers_product_of_derivatives}] \quad
    \ge           & \ n \mu_k \sin(\alpha) \left((\sigma - \eta) - \eta \sin(\alpha) - C_2^2 n \sin^3 (\alpha) - 2 C_2 C_4 n^2 \sin^2(\alpha)\right).
  \end{align*}
  Since $\left(-\eta \sin(\alpha) - C_2^2 n \sin^3 (\alpha) - 2 C_2 C_4 n^2 \sin^2 (\alpha)\right)$ is monotonically decreasing
  and $\sigma > \eta$ holds from \eqref{parameter_condition_for_G_i} and $\gamma_1 \in (0, 1)$,
  there exists the step size $\hat{\alpha}_1 \in (0, \pi / 2]$ satisfying the last formula of the right-hand side is no less than 0.
  When
  $$
    \sin(\hat{\alpha}_1) \le \frac{\sigma - \eta}{2 n} \frac{1}{\max \left\{\eta, C_2^\frac{2}{3}, \sqrt{2 C_2 C_4}\right\}},
  $$
  from $0 < \sigma -\eta < \sigma \le 1$,
  \begin{align*}
     & (\sigma - \eta) - \eta \sin(\hat{\alpha}_1) - C_2^2 n \sin^3 (\hat{\alpha}_1) - 2 C_2 C_4 n^2 \sin^2 (\hat{\alpha}_1) \\
     & \quad \ge (\sigma - \eta) - \frac{\sigma - \eta}{2 n} - \frac{(\sigma - \eta)^3}{8 n^2} - \frac{(\sigma - \eta)^2}{4} \\
     & \quad \ge (\sigma - \eta) \left(1 - \frac{1}{2} - \frac{1}{8} - \frac{1}{4}\right) \quad \ge 0.
  \end{align*}
  Therefore,
  $g^k(\alpha) \ge 0$ is satisfied for any $\alpha \in (0, \hat{\alpha}_1]$.

  Next, we consider the range of $\alpha$ such that $G^k_i(\alpha) \ge 0$.
  From \eqref{uppers_product_of_derivatives},
  \begin{subequations}
    \label{upper_derivatives_element_wise_munus_products}
    \begin{align}
      \abs{\tilde{\dot{x}}_i \tilde{\dot{s}}_i - \frac{\gamma_1}{n}\tilde{\dot{x}}^\top \tilde{\dot{s}}}
       & \le \left(1 + \frac{\gamma_1}{n}\right) C_2^2 n^2 \mu_k \le 2 C_2^2 n^2 \mu_k \label{upper_derivatives_element_wise_minus_products_dot} \\
      \abs{\tilde{\ddot{x}}_i \tilde{\dot{s}}_i - \frac{\gamma_1}{n}\tilde{\ddot{x}}^\top \tilde{\dot{s}}}, \,
      \abs{\tilde{\dot{x}}_i \tilde{\ddot{s}}_i - \frac{\gamma_1}{n}\tilde{\dot{x}}^\top \tilde{\ddot{s}}}
       & \le 2 C_2 C_4 n^3 \mu_k
    \end{align}
  \end{subequations}
  is satisfied.
  Therefore, we have
  \begin{align*}
    G_i^k(\alpha) = & \ x_i^k(\alpha) s_i^k(\alpha) - \gamma_1 \mu_k(\alpha)                                                                                                                                                                                             \\
    [\because \eqref{x_s_alpha_Hadamard}, \eqref{def_mu}, \eqref{x_s_alpha_inner_product}] \quad
    \ge             & \ x^k_i s^k_i (1 - \sin(\alpha)) + \sigma \mu_k \sin(\alpha)                                                                                                                                                                                       \\
                    & \ + \left(\tilde{\ddot{x}}_i \tilde{\ddot{s}}_i - \tilde{\dot{x}}_i \tilde{\dot{s}}_i \right) (1 - \cos(\alpha))^2
    - \left(\tilde{\dot{x}}_i \tilde{\ddot{s}}_i + \tilde{\ddot{x}}_i \tilde{\dot{s}}_i \right) \sin(\alpha) (1 - \cos(\alpha))                                                                                                                                          \\
                    & \ - \norm{S^k v_1^k}_\infty \sin(\alpha) - \norm{S^k v_2^k}_\infty (1 - \cos(\alpha))                                                                                                                                                              \\
                    & \ - \frac{\gamma_1}{n} \bigg(n \mu_k \left((1 - \sin(\alpha)) + \sigma \sin(\alpha)\right)                                                                                                                                                         \\
                    & \ - \tilde{\dot{x}}^\top \tilde{\dot{s}} (1 - \cos(\alpha))^2 - \left(\tilde{\dot{x}}^\top \tilde{\ddot{s}} + \tilde{\ddot{x}}^\top \tilde{\dot{s}}\right) \sin(\alpha) (1 - \cos(\alpha))) \notag                                                 \\
                    & \ + \norm{S^k v_1^k}_1 \sin(\alpha) + \norm{S^k v_2^k}_1 (1 - \cos(\alpha)) \bigg)                                                                                                                                                                 \\
    [\because \eqref{def_neighborhood}, \eqref{upper_residual_term_MNES}, \eqref{inequality_norms}] \quad
    \ge             & \ (1 - \gamma_1) \sigma \mu_k \sin(\alpha) - (1 + \gamma_1) \eta \mu_k (\sin(\alpha) + 1 - \cos(\alpha))                                                                                                                                           \\
                    & \ + \tilde{\ddot{x}}_i \tilde{\ddot{s}}_i (1 - \cos(\alpha))^2
    - \left(\tilde{\dot{x}}_i \tilde{\dot{s}}_i - \frac{\gamma_1}{n} \tilde{\dot{x}}^\top \tilde{\dot{s}}\right) (1 - \cos(\alpha))^2                                                                                                                                    \\
                    & \ - \left(\tilde{\dot{x}}_i \tilde{\ddot{s}}_i - \frac{\gamma_1}{n} \tilde{\dot{x}}^\top \tilde{\ddot{s}} + \tilde{\ddot{x}}_i \tilde{\dot{s}}_i - \frac{\gamma_1}{n} \tilde{\ddot{x}}^\top \tilde{\dot{s}}\right) \sin(\alpha) (1 - \cos(\alpha)) \\
    [\because \eqref{sin_square_more_than_one_minus_cos}, \eqref{upper_product_of_ddot_x_and_ddot_s_element_wise}, \eqref{upper_derivatives_element_wise_munus_products}] \quad
    \ge             & \ \mu_k \sin(\alpha) \bigg((1 - \gamma_1) \sigma - (1 + \gamma_1) \eta - (1 + \gamma_1) \eta \sin(\alpha)                                                                                                                                          \\
                    & \ - (C_4^2 n^4 + 2 C_2^2 n^2) \sin^3(\alpha) - 4 C_2 C_4 n^3 \sin^2 (\alpha)\bigg).
  \end{align*}
  We can derive the same discussion as $g^k(\alpha)$ using \eqref{parameter_condition_for_G_i}.
  When
  $$
    \sin(\hat{\alpha}_2) \le \frac{(1 - \gamma_1) \sigma - (1 + \gamma_1) \eta}{2 n^\frac{3}{2}} \frac{1}{\max \left\{(1 + \gamma_1) \eta, (C_4^2 + 2 C_2^2)^\frac{1}{3}, 2 \sqrt{C_2 C_4}\right\}},
  $$
  from $0 < (1 - \gamma_1) \sigma - (1 + \gamma_1) \eta < \sigma \le 1$,
  \begin{align*}
     & (1 - \gamma_1) \sigma - (1 + \gamma_1) \eta - (1 + \gamma_1) \eta \sin(\hat{\alpha}_2) - (C_4^2 n^4 + 2 C_2^2 n^2) \sin^3(\hat{\alpha}_2) - 4 C_2 C_4 n^3 \sin^2 (\hat{\alpha}_2) \\
     & \quad \ge ((1 - \gamma_1) \sigma - (1 + \gamma_1) \eta) \left(1 - \frac{1}{2 n^\frac{3}{2}} - \frac{1}{2^3 n^\frac{1}{2}} - \frac{1}{2^2}\right)                                  \\
     & \quad \ge ((1 - \gamma_1) \sigma - (1 + \gamma_1) \eta) \left(1 - \frac{1}{2} - \frac{1}{8} - \frac{1}{4}\right)                                                                  \\
     & \quad \ge 0.
  \end{align*}
  Therefore,
  $G_i^k(\alpha) \ge 0$ is satisfied for $\alpha \in (0, \hat{\alpha}_2]$.

  Lastly, we consider $h^k(\alpha) \ge 0$.
  Similarly to the derivation of \eqref{lower_x_s_alpha_inner_product},
  we can obtain the following:
  \begin{align}
    x^k(\alpha)^\top s^k(\alpha)
    \le & \ (x^k)^\top s^k \left((1 - \sin(\alpha)) + \sigma \sin(\alpha)\right) \notag                                                             \\
        & \ + \abs{\tilde{\dot{x}}^\top \tilde{\dot{s}}} (1 - \cos(\alpha))^2
    + \left(\abs{\tilde{\dot{x}}^\top \tilde{\ddot{s}}} + \abs{\tilde{\ddot{x}}^\top \tilde{\dot{s}}}\right) \sin(\alpha) (1 - \cos(\alpha)) \notag \\
        & \ + \eta n \mu_k (\sin(\alpha) + 1 - \cos(\alpha)),
    \label{upper_x_s_alpha_inner_product}
  \end{align}
  Therefore,
  \begin{align*}
    h^k(\alpha) = & \ \left(1 - (1-\beta) \sin(\alpha)\right)(x^k)^\top s^k - x^k(\alpha)^\top s^k(\alpha)                                                    \\
    [\because \eqref{upper_x_s_alpha_inner_product}] \quad
    \ge           & \ (x^k)^\top s^k \left(\beta \sin(\alpha) - \sigma \sin(\alpha)\right) - \eta n \mu_k (\sin(\alpha) + 1 - \cos(\alpha))                   \\
                  & \ - \abs{\tilde{\dot{x}}^\top \tilde{\dot{s}}} (1 - \cos(\alpha))^2
    - \left(\abs{\tilde{\dot{x}}^\top \tilde{\ddot{s}}} + \abs{\tilde{\ddot{x}}^\top \tilde{\dot{s}}}\right) \sin(\alpha) (1 - \cos(\alpha))                  \\
    [\because \eqref{def_mu}] \quad
    =             & \ n \mu_k \left(\beta \sin(\alpha) - \sigma \sin(\alpha) - \eta (\sin(\alpha) + 1 - \cos(\alpha))\right)                                  \\
                  & \ - \abs{\tilde{\dot{x}}^\top \tilde{\dot{s}}} (1 - \cos(\alpha))^2
    - \left(\abs{\tilde{\dot{x}}^\top \tilde{\ddot{s}}} + \abs{\tilde{\ddot{x}}^\top \tilde{\dot{s}}}\right) \sin(\alpha) (1 - \cos(\alpha))                  \\
    [\because \eqref{uppers_product_of_derivatives}] \quad
    \ge           & \ n \mu_k \left((\beta - \sigma - \eta) \sin(\alpha) - \eta (1 - \cos(\alpha))\right)                                                     \\
                  & \ - C_2^2 n^2 \mu_k (1 - \cos(\alpha))^2 - 2 C_2 C_4 n^3 \mu_k \sin(\alpha) (1 - \cos(\alpha))                                            \\
    [\because \eqref{sin_square_more_than_one_minus_cos}] \quad
    \ge           & \ n \mu_k \sin(\alpha) \left((\beta - \sigma - \eta) - \eta \sin(\alpha) - C_2^2 n \sin^3 (\alpha) - 2 C_2 C_4 n^2 \sin^2(\alpha)\right).
  \end{align*}
  The last coefficient on the right-hand side is cubic for $\sin(\alpha)$ and monotonically decreasing for $\alpha$.
  Therefore,
  it is possible to take a step size $\hat{\alpha}_3$ satisfying $h^k(\hat{\alpha}_3) \ge 0$ from \eqref{parameter_condition_beta_more_than_sigma_plus_eta}.
  When
  $$
    \sin(\hat{\alpha}_3) \le \frac{\beta - \sigma - \eta}{2 n} \frac{1}{\max \left\{\eta, C_2^\frac{2}{3}, \sqrt{2 C_2 C_4}\right\}},
  $$
  from $0 < \beta - \sigma -\eta < \beta < 1$, we know
  \begin{align*}
     & (\beta - \sigma - \eta) - \eta \sin(\hat{\alpha}_3) - C_2^2 n \sin^3 (\hat{\alpha}_3) - 2 C_2 C_4 n^2 \sin^2(\hat{\alpha}_3)                          \\
     & \quad \ge (\beta - \sigma - \eta) - \frac{\beta - \sigma - \eta}{2 n} - \frac{(\beta - \sigma - \eta)^3}{8 n^2} - \frac{(\beta - \sigma - \eta)^2}{4} \\
     & \quad > (\beta - \sigma - \eta) \left(1 - \frac{1}{2} - \frac{1}{8} - \frac{1}{4}\right)                                                              \\
     & \quad = \frac{\beta - \sigma - \eta}{8}
    > 0.
  \end{align*}
  Therefore,
  $g^k(\alpha) \ge 0$ is satisfied for $\alpha \in (0, \hat{\alpha}_3]$.

  From the above discussions,
  when $\hat{\alpha}$ is taken such that
  \begin{equation}
    \sin(\hat{\alpha}) = \frac{1}{n^\frac{3}{2}} \frac{
      \min \left\{(1 - \gamma_1) \sigma - (1 + \gamma_1) \eta, \beta - \sigma - \eta\right\}
    }{
      2 \max \left\{(1 + \gamma_1) \eta, (C_4^2 + 2 C_2^2)^\frac{1}{3}, 2 \sqrt{C_2 C_4}\right\}
    },
    \label{def_min_step_size}
  \end{equation}
  $g^k(\alpha), G_i^k(\alpha), h^k(\alpha) \ge 0$ are satisfied for all
  $k$ and $\alpha \in (0, \hat{\alpha}]$.
\end{proof}

Since $\hat{\alpha}$ defined in \eqref{def_min_step_size} can satisfy the conditions in line~\ref{line_algo_II_arc_search_decide_step_size} of Algorithm~\ref{algorithm_II_arc_IPM}, we can find the step length
$\alpha_k \ge \hat{\alpha} > 0$.
Therefore,
Algorithm~\ref{algorithm_II_arc_IPM} is well-defined.
From $h^k(\alpha_k) \ge 0$ for all $k$,
\begin{align}
  h^k(\alpha_k) \ge 0 \Rightarrow
  x^k(\alpha_k)^\top s^k(\alpha_k) & \le (1 - (1 - \beta)\sin(\alpha_k))(x^k)^\top s^k \nonumber     \\
                                   & \le (1 - (1 - \beta)\sin(\hat{\alpha}))(x^k)^\top s^k \nonumber \\
                                   & \le (1 - (1 - \beta)\sin(\hat{\alpha}))^k (x^0)^\top s^0.
  \label{mu-decrement}
\end{align}
Due to \eqref{residuals_decreasing}, it also holds that
\begin{equation}
  \norm*{(r_b(x^k), r_c(y^k, s^k))}
  \le \left(1-\sin(\hat{\alpha})\right)^k
  \norm*{(r_b(x^0), r_c(y^k, s^0))}.
  \label{constraints_residual_decrement}
\end{equation}

We can prove the polynomial complexity of the proposed method
based on the following theorem.
\begin{theorem}[{\cite[Theorem~1.4]{yang2020arc}}]
  \label{polynomiality_by_mu}
  Suppose that an algorithm for solving \eqref{KKT_conditions} generates a sequence of iterations that satisfies
  $$
    \mu_{k+1} \le \left(1-\frac{\delta}{n^\psi}\right) \mu_k, \quad k=0,1,2, \ldots,
  $$
  for some positive constants $\delta$ and $\psi$.
  Then there exists an index $K$ with
  $$
    K = \order*{n^\psi \log (\mu_0 / \zeta)}
  $$
  such that
  $$
    \mu_k \le \zeta \text { for } \forall k \ge K.
  $$
\end{theorem}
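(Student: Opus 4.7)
The plan is a direct telescoping of the hypothesized one-step contraction, followed by a standard logarithmic inversion. First, I would apply the inequality $\mu_{k+1} \le (1 - \delta/n^\omega)\mu_k$ inductively starting from iteration $0$ to obtain the closed-form bound
\begin{equation*}
\mu_k \le \left(1 - \frac{\delta}{n^\omega}\right)^{k} \mu_0 \qquad \text{for all } k \ge 0.
\end{equation*}
This is immediate by induction, given that the contraction factor lies in $(0,1)$, which may be assumed without loss of generality (otherwise the conclusion is either vacuous or trivially stronger than claimed).

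Next, I would translate this geometric decay into an explicit iteration count. It suffices to find $K$ such that $(1-\delta/n^\omega)^K \mu_0 \le \zeta$, equivalently
\begin{equation*}
K \log\!\bigl(1/(1 - \delta/n^\omega)\bigr) \ge \log(\mu_0/\zeta).
\end{equation*}
Invoking the elementary inequality $-\log(1-x) \ge x$ valid for $x \in (0,1)$, a sufficient condition is $K \cdot \delta/n^\omega \ge \log(\mu_0/\zeta)$. Therefore choosing
\begin{equation*}
K := \left\lceil \frac{n^\omega}{\delta}\log(\mu_0/\zeta)\right\rceil
\end{equation*}
guarantees $\mu_k \le \zeta$ for every $k \ge K$, and because $\delta$ is a fixed constant independent of $n$ and $\zeta$, this $K$ satisfies $K = \order*{n^\omega \log(\mu_0/\zeta)}$, as claimed.

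The argument is entirely routine and relies on no structural property of the IPM beyond the hypothesized per-iteration reduction of $\mu_k$; thus I do not anticipate any substantive obstacle. The only step requiring a small amount of care is the passage from the logarithmic form to the stated $\order*{\cdot}$ bound via the inequality $-\log(1-x) \ge x$, which absorbs the $1/\delta$ factor into the hidden constant without affecting either the polynomial order $n^\omega$ in the dimension or the logarithmic dependence on the target accuracy $\mu_0/\zeta$.
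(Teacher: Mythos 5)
Your proof is correct and is precisely the standard argument used for this result; the paper itself does not reprove the theorem but simply cites it from Yang's monograph, and the proof given there proceeds exactly as you do, by telescoping the contraction and inverting via $-\log(1-x)\ge x$. The only minor housekeeping worth noting is that the hypothesis together with $\mu_k = (x^k)^\top s^k/n \ge 0$ forces $\mu_k$ to be nonincreasing, which is what makes the final ``for all $k\ge K$'' clause follow immediately once $\mu_K \le \zeta$; you implicitly use this and it is fine.
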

Applying \eqref{mu-decrement}, \eqref{def_neighborhood}, $(x^k, y^k, s^k) \in \Neighborhood(\gamma_1, \gamma_2)$,
\eqref{constraints_residual_decrement} and a result that $\sin(\hat{\alpha})$ is propositional to $n^{-1.5}$ in \eqref{def_min_step_size}
to this theorem,
we can obtain the following theorem.
\begin{theorem}
  \label{main-theorem}
  Algorithm~\ref{algorithm_II_arc_IPM} generates a $\zeta$-optimal solution in at most
  $$
    \order{n^{1.5} \log \left(\frac{\max \{\mu_0, \norm{r_b(x^0), r_c(y^0, s^0)}\}}{\zeta}\right)}
  $$
  iterations.
\end{theorem}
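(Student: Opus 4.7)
The approach is to apply Theorem~\ref{polynomiality_by_mu} directly to the two geometric decrements already established, one for the duality measure and one for the constraint residuals. All the hard analytical work (proving the one-step descent of $\mu_k$ under the inexact Newton step, showing that the iterates stay in $\mathcal{N}(\gamma_1,\gamma_2)$, and quantifying the lower bound $\sin(\hat{\alpha})=C/n^{1.5}$) has already been carried out in Proposition~\ref{proposition_lower_bound_of_step_size} and the surrounding lemmas, so what remains is essentially substitution.

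First, I would translate \eqref{mu-decrement} into a one-step ratio: since $(x^{k+1})^\top s^{k+1} \le (1-(1-\beta)\sin(\hat{\alpha}))(x^k)^\top s^k$, dividing by $n$ gives
$$\mu_{k+1} \le \left(1-(1-\beta)\sin(\hat{\alpha})\right)\mu_k.$$
By Proposition~\ref{proposition_lower_bound_of_step_size}, $\sin(\hat{\alpha})=C/n^{1.5}$ for a positive constant $C$ independent of $n$, so this fits the hypothesis of Theorem~\ref{polynomiality_by_mu} with $\delta=(1-\beta)C>0$ (positive by \eqref{parameter_condition_beta_more_than_sigma_plus_eta}) and $\omega=3/2$. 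Hence $\mu_k\le\zeta$ for every $k\ge K_1=\mathcal{O}(n^{1.5}\log(\mu_0/\zeta))$.

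Second, applying the same reasoning to \eqref{constraints_residual_decrement}, which reads
$\|(r_b(x^{k+1}),r_c(y^{k+1},s^{k+1}))\| \le (1-\sin(\hat{\alpha}))\|(r_b(x^k),r_c(y^k,s^k))\|$,
Theorem~\ref{polynomiality_by_mu} (with $\delta=C$ and the same $\omega=3/2$, after replacing $\mu_k$ by the residual norm, which is valid since the theorem only uses the geometric rate) yields $\|(r_b(x^k),r_c(y^k,s^k))\|\le\zeta$ for all $k\ge K_2=\mathcal{O}(n^{1.5}\log(\|(r_b(x^0),r_c(y^0,s^0))\|/\zeta))$. Taking $K=\max\{K_1,K_2\}$ gives both conditions simultaneously, and combined with nonnegativity of $(x^k,s^k)$ guaranteed by the neighborhood membership in Lemma~\ref{lemma_in_neighborhood}, this places $(x^k,y^k,s^k)\in\mathcal{S}^*_\zeta$ as defined in \eqref{def_zeta_optimal_SolSet}. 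Bundling the two logarithms through $\max\{\log a,\log b\}=\log\max\{a,b\}$ yields the stated bound.

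There is no genuine obstacle in this final step: the only thing to be careful about is that the rate $1-(1-\beta)\sin(\hat{\alpha})$ for $\mu_k$ is slightly worse than the rate $1-\sin(\hat{\alpha})$ for the residuals, but both scale as $1-\Theta(n^{-3/2})$, so the same $\omega=3/2$ governs both. If one preferred a single-application argument, one could instead invoke the neighborhood inequality in \eqref{def_neighborhood} to deduce $\|(r_b,r_c)\|\le\gamma_2(\|(r_b^0,r_c^0)\|/\mu_0)\mu_k$ and reduce the residual condition to a stricter duality-gap threshold $\bar{\zeta}=\zeta\min\{1,\mu_0/(\gamma_2\|(r_b^0,r_c^0)\|)\}$, producing the same $\mathcal{O}(n^{1.5}\log(\max\{\mu_0,\|(r_b^0,r_c^0)\|\}/\zeta))$ count after absorbing the constant $\gamma_2$.
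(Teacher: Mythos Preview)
Your proposal is correct and mirrors the paper's own argument, which is in fact just the one-sentence remark preceding the theorem: apply Theorem~\ref{polynomiality_by_mu} to \eqref{mu-decrement} and \eqref{constraints_residual_decrement} using $\sin(\hat\alpha)=C/n^{1.5}$ from Proposition~\ref{proposition_lower_bound_of_step_size}. One nitpick: the positivity of $\delta=(1-\beta)C$ comes from $\beta<1$ (implicit in the Armijo-type condition and used explicitly in the proof of Proposition~\ref{proposition_lower_bound_of_step_size}), not from \eqref{parameter_condition_beta_more_than_sigma_plus_eta}.
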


In the case that the input data is integral,
\citet{al2009convergence} and \citet{mohammadisiahroudi2024efficient} analyze that the iteration complexity of II-line is $\order*{n^2 L}$,
where $L$ is the binary length of the input data denoted as
$$
  L=m n+m+n+\sum_{i, j}\left\lceil\log \left(\abs{a_{i j}}+1\right)\right\rceil+\sum_i\left\lceil\log \left(\abs{c_i}+1\right)\right\rceil+\sum_j\left\lceil\log \left(\abs{b_j}+1\right)\right\rceil.
$$
Theorem~\ref{main-theorem} indicates that II-arc can reduce the iteration complexity from $n^2$ to $n^{1.5}$,
by a factor of $n^{0.5}$.
This reduction is mainly brought by the ellipsoidal approximation in the arc-search method.
\section{Numerical experiments}
\label{section_numerical_experiments}
In this section,
we describe the implementation and the numerical experiments of the proposed method.
The experiments were conducted on a Linux server with Opteron 4386 (3.10GHz), 16 cores, and 128GB RAM,
and the methods were implemented with Python~3.10.9.
The code is openly available for public use\footnote{\url{https://github.com/Research-Iida/Arc-search_IPM_LP_py}.}.

The main purpose of the numerical experiments presented in this paper
is to show the performance of the proposed
inexact arc-search method in comparison with an existing inexact line-search method.
Therefore,
the comparison is intentionally restricted to the existing inexact line-search method,
while a comprehensive comparison with state-of-the-art solvers is beyond the scope of this paper.
For completeness,
numerical results obtained by the state-of-the-art solver are reported in Appendix~\ref{section_appendix_SOTA_results}.

\subsection{Inexact infeasible line-search IPM}
\label{section_II_line}
In this section,
we describe in detail the algorithm of an inexact infeasible line-search IPM(II-line) that is used as a baseline for comparison with the proposed method.
The algorithm considered in this paper is based on \cite[Algorithm~1]{mohammadisiahroudi2024efficient}.

At each iteration $k$,
the method computes an inexact search direction $\tilde{\dot{z}}$ from \eqref{inexact_first_derivative_MNES},
and updates the next iteration point with the following formula:
\begin{subequations}
  \label{def_variable_alpha_with_inexact_derivatives_line}
  \begin{align}
    x^k(\alpha) = & \ x^k - \alpha \tilde{\dot{x}}, \\
    y^k(\alpha) = & \ y^k - \alpha \tilde{\dot{y}}, \\
    s^k(\alpha) = & \ s^k - \alpha \tilde{\dot{s}},
  \end{align}
\end{subequations}
where $(\tilde{\dot{x}}, \tilde{\dot{y}}, \tilde{\dot{s}})$ are obtained by \eqref{resolution_first_derivative_from_MNES}.

For the purpose of proving convergence,
we introduce the following expressions in a form similar to the previously defined in \eqref{def_G_g_h}:
\begin{subequations}
  \label{def_G_g_h_line}
  \begin{align}
    \hat{G}_i^k(\alpha) & = x_i^k(\alpha) s_i^k(\alpha) - \gamma_1 \mu_k(\alpha) \text { for } i \in\{1, \ldots, n\}, \\
    \hat{g}^k(\alpha)   & = x^k(\alpha)^\top s^k(\alpha) - (1 - \alpha)(x^k)^\top s^k,                                \\
    \hat{h}^k(\alpha)   & = \left(1 - (1-\beta) \alpha\right)(x^k)^\top s^k-x^k(\alpha)^\top s^k(\alpha),
  \end{align}
\end{subequations}
where $x^k(\alpha)$ and $s^k(\alpha)$ are defined by \eqref{def_variable_alpha_with_inexact_derivatives_line}.
As in the proposed method, the next iterate is computed using a step size $\alpha$ that satisfies the following conditions:
\begin{equation}
  \label{conditions_G_g_h_no_less_than_0_line}
  \hat{G}_i^k(\alpha) \ge 0 \text { for } i \in\{1, \ldots, n\}, \quad
  \hat{g}^k(\alpha) \ge 0, \quad
  \hat{h}^k(\alpha) \ge 0.
\end{equation}
Under this condition, the next iteration point $(x^k(\alpha), y^k(\alpha), s^k(\alpha))$ is in the neighborhood $\Neighborhood(\gamma_1, \gamma_2)$~\cite[Lemma~4.5]{mohammadisiahroudi2024efficient}.

Algorithm~\ref{algorithm_II_line_IPM} gives the framework of the II-line.
\begin{algorithm}[ht]
  \caption{The inexact infeasible line-search interior-point method (II-line)}
  \label{algorithm_II_line_IPM}
  \begin{algorithmic}[1]
    \Require $\zeta > 0$,
    $\gamma_1 \in (0,1)$,
    $\gamma_2 \ge 1$,
    $\sigma, \eta, \beta$ satisfying \eqref{parameter_conditions} and
    an initial point $(x^0, y^0, s^0)$ meeting \eqref{def_initial_point}.
    \Ensure $\zeta$-optimal solution $(x^k, y^k, s^k)$
    \State $k \leftarrow 0$
    \While {$(x^k, y^k, s^k) \notin S_\zeta$}
    \State $\mu_k \leftarrow (x^k)^\top s^k / n$
    \State Calculate $(\tilde{\dot{x}}, \tilde{\dot{y}}, \tilde{\dot{s}})$ by solving \eqref{first_derivative_MNES} inexactly satisfying \eqref{def_upper_derivatives_residual_MNES}.
    \State $\alpha_k \leftarrow \max \left\{\alpha \in (0, 1] \mid \alpha \text{ satisfies } \eqref{conditions_G_g_h_no_less_than_0_line}\right\}$.
    \label{line_algo_II_line_search_decide_step_size}
    \State Set $(x^{k+1}, y^{k+1}, s^{k+1}) = (x^k(\alpha_k), y^k(\alpha_k), s^k(\alpha_k))$ by \eqref{def_variable_alpha_with_inexact_derivatives_line}.
    \State $k \leftarrow k+1$
    \EndWhile
  \end{algorithmic}
\end{algorithm}

The theoretical iteration complexity of Algorithm~\ref{algorithm_II_line_IPM}, which is $\order{n^2 L}$,
is proven in the appendix~\ref{section_proof_for_inexact_line}.

\subsection{Test problems}
\label{section_test_problems}
Iterative solvers are often employed when the matrix related to the normal equation is very large and the Cholesky factorization is impractical.
In this context,
we use the largest problems
in the NETLIB collection~\cite{browne1995netlib};
DFL001, QAP15, STOCFOR3 and the fifteen Kennington problems~\cite{carolan1990empirical}.
We used the same preprocessing as \citet[Section~5.1]{iida2024infeasible},
e.g., removing redundant rows of the constraint matrix $A$.

Table~\ref{table_problem_information} shows the information about the problems.
The first column of the table is the problem name,
and the second and the third are the variable size $n$ and the number of constraints $m$, respectively, after preprocessing denoted as above.
The fourth column is the number of nonzero elements in the constraint matrix $A$,
and the last column shows the density of $A$.

\begin{table}[htb]
  \centering
  \caption{Problem information}
  \label{table_problem_information}
  \begin{tabular}{lrrrr}
    \hline
    problem  & $n$    & $m$    & nonzeros & density($(\text{nonzeros} / m n) \times 10^4$) \\
    \hline \hline
    CRE-A    & 6997   & 3299   & 17559    & 7.61                                           \\
    CRE-B    & 36382  & 5336   & 112249   & 5.78                                           \\
    CRE-C    & 5684   & 2647   & 14139    & 9.40                                           \\
    CRE-D    & 28601  & 4102   & 86734    & 7.39                                           \\
    DFL001   & 11853  & 5713   & 35449    & 5.23                                           \\
    KEN-07   & 5127   & 3951   & 11121    & 5.49                                           \\
    KEN-11   & 32996  & 26341  & 71032    & 0.82                                           \\
    KEN-13   & 72784  & 58757  & 155144   & 0.36                                           \\
    KEN-18   & 255248 & 205676 & 553134   & 0.11                                           \\
    OSA-07   & 25067  & 1118   & 144812   & 51.67                                          \\
    OSA-14   & 54797  & 2337   & 317097   & 24.76                                          \\
    OSA-30   & 104374 & 4350   & 604488   & 13.31                                          \\
    OSA-60   & 243246 & 10280  & 1408073  & 5.63                                           \\
    PDS-06   & 36920  & 17604  & 78016    & 1.20                                           \\
    PDS-10   & 63905  & 30773  & 135020   & 0.69                                           \\
    PDS-20   & 139330 & 65437  & 293841   & 0.32                                           \\
    QAP15    & 22275  & 6330   & 94950    & 6.73                                           \\
    STOCFOR3 & 21910  & 15044  & 89746    & 2.72                                           \\
    \hline
  \end{tabular}
\end{table}

\subsection{Implementation details}
\label{section_implementation_details}
We implemented the proposed method (II-arc) based on Algorithm~\ref{algorithm_II_arc_IPM}
and the inexact infeasible line-search IPM (II-line) based on Algorithm~\ref{algorithm_II_line_IPM}.
In this section,
we describe the implementation details before discussing the results.

\subsubsection{Parameter settings}
In these numerical experiments,
we set
$$
  \sigma = 0.4, \
  \eta = 0.3, \
  \gamma_1 = 0.1, \
  \gamma_2 = 1, \
  \beta = 0.9.
$$

These parameters are chosen to satisfy \eqref{parameter_conditions}.

\subsubsection{Solving systems of linear equations}
\label{section_solving_systems_of_linear_equations}
To solve the systems of linear equations inexactly,
we employ preconditioned CG in Scipy.
There are many other iterative solvers for solving systems,
such as the minimum residual (MINRES), and the generalized minimum residual (GMRES) methods.
The preliminary experiments showed that CG was the fastest inexact solver  in Scipy for II-arc and II-line.

Although the proposed method adopts the MNES formulation in Section~\ref{section_proposed_method} for theoretical analysis,
preliminary experiments revealed that MNES suffers from numerical instability in practice.
In theory,
the result of \citet{monteiro2004uniform} implies that MNES avoids the spectral deterioration of unpreconditioned normal equations.
Despite this theoretical guarantee,
the practical performance of iterative solvers such as CG on MNES may still be unsatisfactory in all test problems.
In our implementation,
the basis $\hat{B}$ required for constructing MNES is obtained using the maximum weight basis algorithm~\cite{monteiro2003convergence},
which relies on repeated rank computations to enforce linear independence.
In finite precision arithmetic, this procedure may still result in a singular submatrix $A_{\hat{B}}$,
or the repeated rank evaluations incur a substantial computational overhead for large-scale instances
(see Section~\ref{section_stopping_criteria} for a detailed time-limit).

Therefore,
we rely on empirical evidence to assess the numerical stability of MNES in our experimental setting,
and, based on these observations,
we use the NES formulations \eqref{first_derivative_NES} and
\begin{equation}
  M^k \ddot{y} = \rho_2^k, \label{second_derivative_NES}
\end{equation}
instead of the MNES \eqref{first_derivative_MNES} and \eqref{second_derivative_MNES},
respectively in the numerical experiments.
The inexact solution of \eqref{first_derivative_NES} satisfies \eqref{inexact_first_derivative_NES},
and that of \eqref{second_derivative_NES} satisfies
\begin{equation}
  M^k \tilde{\ddot{y}} = \rho_2^k + r_2^k,
  \label{inexact_second_derivave_NES}
\end{equation}
where the error $r_2^k$ is defined similar to $r_1^k$.
As for the solution accuracy,
we set the following threshold as in \eqref{def_upper_derivatives_residual_MNES}:
\begin{equation}
  \norm{r_i^k} \leq \eta \frac{\sqrt{\mu_k}}{\sqrt{n}}
  \quad \forall i \in \{1, 2\}.
  \label{def_upper_derivatives_residual_NES}
\end{equation}

In the numerical experiments,
we solve the standard NES linear systems \eqref{first_derivative_NES} and \eqref{second_derivative_NES} using preconditioned CG with a Jacobi (diagonal) preconditioner.
Here ``preconditioned CG'' refers to preconditioning of the iterative solver,
whereas the ``preconditioned NES'' formulation analyzed by \citet{monteiro2004uniform} corresponds to our MNES formulation used in Section~\ref{section_proposed_method} for the complexity analysis.
The reason why we applied a Jacobi preconditioner is that it is simpler than the others,
such as the controlled Cholesky Factorization preconditioner~\cite{bocanegra2007using},
the splitting preconditioner~\cite{oliveira2005new},
the hybrid of these~\cite{bartmeyer2021switching},
the Schur complement approach~\cite{scott2018schur},
and the method by Bergamaschi et al.~\cite{bergamaschi2021new}.
As mentioned above,
the main purpose of the experiments in this paper is to
compare the proposed method (II-arc) and the existing method
(II-line), and
the selection of the preconditioner with full numerical experiments is beyond the scope of this paper and can be considered separately.

II-arc needs to solve two systems---\eqref{inexact_first_derivative_NES} and \eqref{inexact_second_derivave_NES}---at each iteration,
while II-line needs to solve one---\eqref{inexact_first_derivative_NES}.
Therefore,
the reduction in the iteration complexity discussed in Section~\ref{section_proposed_method}
does not ensure that in the computational cost.

\subsubsection{The modification of the second order derivatives}
\label{section_modification_second_order_derivative}
When $\norm{-2 \tilde{\dot{x}} \circ \tilde{\dot{s}}}_\infty \le \eta \mu_k$ is satisfied,
\eqref{indexact_second_derivative_conditions} and \eqref{upper_residual_term_MNES} can hold with
$(\tilde{\ddot{x}}, \tilde{\ddot{y}}, \tilde{\ddot{s}}) = (0,0,0)$.
Therefore,
to shorten the computation time,
we skip solving \eqref{second_derivative_NES} by the preconditioned CG and set $(\tilde{\ddot{x}}, \tilde{\ddot{y}}, \tilde{\ddot{s}}) = (0,0,0)$.
In this case,
\eqref{def_variable_alpha_with_inexact_derivatives} can be interpreted as a line-search method.

Furthermore,
if the inexact solution $\tilde{\ddot{y}}$ of \eqref{second_derivative_NES}
satisfies $\norm{M_2^k \tilde{\ddot{y}}} - \rho_2^k > \norm{M^k 0 - \rho_2^k} = \norm{-\rho_2^k}$
then the zero vector is adopted as the solution of the linear system.
Therefore,
we replace $\tilde{\ddot{y}}$ with a zero vector to avoid a large error.
This heuristic aligns with the approach outlined by Mizuno and Jarre~\cite{mizuno1999global}.

\subsubsection{Step size}
At line~\ref{line_algo_II_arc_search_decide_step_size} of Algorithm~\ref{algorithm_II_arc_IPM} and line~\ref{line_algo_II_line_search_decide_step_size} of Algorithm~\ref{algorithm_II_line_IPM},
Armijo's rule~\cite{wright1997primal} is employed to determine an actual step size $\alpha_k$,
since \eqref{conditions_G_g_h_no_less_than_0}
does not give an analytical solution.

\subsubsection{Stopping criteria}
\label{section_stopping_criteria}

The algorithms terminate when the iterate satisfies the stopping criterion,
i.e. $(x^k, y^k, s^k) \in \SolSet_\zeta$.
In this paper, we set $\zeta = 10^{-4}$.
Heuristics can be implemented to achieve higher accuracy;
however, since the main purpose of the numerical experiments here is to compare II-arc and II-line,
discussion of heuristics is left for future work.

In addition,
we stop the algorithms prematurely when
the step size $\alpha_k$ diminishes as $\alpha_k < 10^{-7}$
or the algorithms exceed the time limit of 72,000 seconds.

\subsubsection{Initial points}
Many methods have been proposed for the initial point.
For example,
the method of Yang~\cite[Section~4.1]{Yang2017} generates candidates using the Mehrotra method~\cite{Mehrotra1992} and the Lustig method~\cite{lustig1992implementing}, and select the better one as the initial point.
Although heuristic initial points such as those typically enter the neighborhood $\Neighborhood(\gamma_1, \gamma_2)$ after the first iteration in practice,
the manner in which the initial point is constructed can still have a non-negligible impact on numerical behavior,
especially for large-scale problems.
In particular,
the initialization procedure of \citet{Yang2017} requires solving additional linear systems,
such as $A A^\top y = A c$,
in order to generate the candidate initial points.
For large-scale instances, directly applying Cholesky factorization to such systems can be computationally expensive,
and therefore iterative linear solvers must be employed.
In this case,
the convergence behavior, numerical stability, and overall computational cost may depend on the properties of these auxiliary linear systems and on the performance of the iterative solver.

On the other hand,
the constant initial point defined in~\eqref{def_initial_point} is constructed explicitly to satisfy the theoretical assumptions $(x^0, y^0, s^0) \in \Neighborhood(\gamma_1, \gamma_2)$
and does not require solving any additional linear systems.

Based on these observations,
we compare the initial points proposed by \citet{Yang2017} with ones defined in \eqref{def_initial_point},
considering not only the number of iterations and computation time but also numerical stability and robustness in large-scale settings.

Hereafter,
we denote the choice of the initial point by appending a suffix to the name of each algorithm.
For example,
II-arc initialized by the method of \citet{Yang2017} is referred to as II-arc-Yang,
whereas II-arc initialized by \eqref{def_initial_point} is referred to as II-arc-Constant.
A detailed comparison between II-arc-Yang and II-arc-Constant is presented in Section~\ref{section_numerical_results_initial_point}.

We describe the specific settings for each initial point strategy as follows.
In the method of \citet{Yang2017},
linear systems for obtaining candidate initial points are solved using the same preconditioned CG as described in Section~\ref{section_solving_systems_of_linear_equations}.
The stopping criteria are set to the default values in SciPy~(e.g., $\norm{b - Ax} \le \epsilon \norm{b}$, where $\epsilon = 10^{-5}$).
When the initial point $(x^0, y^0, s^0)$ is chosen according to \eqref{def_initial_point},
we select a sufficiently large $\omega=10^4$.

\subsection{Numerical Results}
\label{section_numerical_results}
First, we compare II-arc-Yang with II-arc-Constant by solving the benchmark problems,
and show that II-arc-Yang achieves better performance while maintaining numerical stability.
Second, we compare II-arc-Yang with II-line-Yang
and show that II-arc-Yang can solve the large problems with fewer iterations and less computation time.

Table~\ref{table_results_for_comparison} reports the detailed numerical results.
The first column of the table is the problem name,
and second to last columns report the number of iterations and the computation time (in seconds).
The underlined results indicate the best results among the four methods.
A mark `-' indicates
the algorithms stop before reaching the optimality described in Section~\ref{section_stopping_criteria}.

\begin{table}[htb]
  \centering
  \caption{Numerical results on the proposed method and the existing methods}
  \label{table_results_for_comparison}
  \begin{tabular}{l|rr|rr|rr}
    \hline
    problem                              &
    \multicolumn{2}{c|}{II-arc-Constant} &
    \multicolumn{2}{c|}{II-arc-Yang}     &
    \multicolumn{2}{c}{II-line-Yang}                                                                                                              \\
                                         & Itr.          & Time                & Itr.          & Time                & Itr. & Time                \\
    \hline \hline
    CRE-A                                & 46            & 44.15               & \underbar{42} & \underbar{41.22}    & 127  & 49.01               \\
    CRE-B                                & 75            & \underbar{306.5}    & \underbar{68} & 307.9               & 258  & 399.5               \\
    CRE-C                                & \underbar{49} & \underbar{54.82}    & \underbar{49} & 56.04               & 125  & 58.23               \\
    CRE-D                                & 75            & 185.69              & \underbar{70} & \underbar{177.53}   & 244  & 246.13              \\
    DFL001                               & 90            & 3019.68             & \underbar{67} & 2826.97             & 123  & \underbar{2187.42}  \\
    KEN-07                               & \underbar{33} & 23.91               & \underbar{33} & 23.44               & 39   & \underbar{22.29}    \\
    KEN-11                               & 42            & \underbar{2472.35}  & \underbar{41} & 2545.39             & 60   & 2597.03             \\
    KEN-13                               & \underbar{51} & 18246.07            & 52            & 17879.89            & 82   & \underbar{16343.66} \\
    KEN-18                               & -             & -                   & -             & -                   & -    & -                   \\
    OSA-07                               & 45            & 4.78                & \underbar{35} & \underbar{4.25}     & 51   & 5.04                \\
    OSA-14                               & 50            & 10.92               & \underbar{38} & \underbar{9.8}      & 59   & 12.97               \\
    OSA-30                               & 49            & 19.66               & \underbar{39} & \underbar{17.67}    & 61   & 24.09               \\
    OSA-60                               & 52            & \underbar{54.17}    & \underbar{46} & 55.2                & -    & -                   \\
    PDS-06                               & 61            & \underbar{127.63}   & \underbar{60} & 131.49              & 97   & 129.61              \\
    PDS-10                               & 76            & 456.39              & \underbar{71} & \underbar{418.36}   & 132  & 456.93              \\
    PDS-20                               & \underbar{94} & 18629.42            & 95            & \underbar{18407.83} & 191  & 25110.81            \\
    QAP15                                & 28            & 38.76               & \underbar{14} & \underbar{34.93}    & 23   & 35.73               \\
    STOCFOR3                             & 50            & \underbar{32249.56} & \underbar{36} & 33710.26            & 71   & 37332.84            \\
    \hline
  \end{tabular}
\end{table}

In this section,
we use a performance profile~\cite{dolan2002benchmarking,gould2016note} for comparing the methods.
In a performance profile,
the horizontal axis represents the scaling parameter $\tau$,
while the vertical axis $P(r \le \tau)$ denotes the fraction of test problems.
When the number of iterations is considered,
$P(r \le \tau)$ indicates the proportion of problems for which the algorithm requires no more than $\tau$ times the number of iterations of the best-performing method.
Hence, an algorithm is regarded as more efficient if it achieves higher values of $P(r \le \tau)$ for smaller $\tau$.
The figures on the performance profile in this section were generated with a Julia package~\cite{orban-benchmarkprofiles-2019}.

\subsubsection{Comparison of initial-point strategies}
\label{section_numerical_results_initial_point}
Both II-arc-Yang and II-arc-Constant reached the stopping criterion $(x^k, y^k, s^k) \in \SolSet_\zeta$
in 17 out of 18 test problems, failing only on KEN-18.
Therefore,
the overall results indicate that the proposed arc-search methods exhibit satisfactory numerical stability and are insensitive to the choice of initial points.

Figure~\ref{fig_comparison_iter_num_based_on_inital_point} presents a performance profile of the number of iterations for II-arc-Yang and II-arc-Constant.
For these problems,
II-arc-Yang consistently requires fewer iterations than II-arc-Constant,
demonstrating that the heuristic of \citet{Yang2017} effectively reduces the computational effort.

\begin{figure}[htb]
  \centering
  \includegraphics[width=0.7\textwidth]{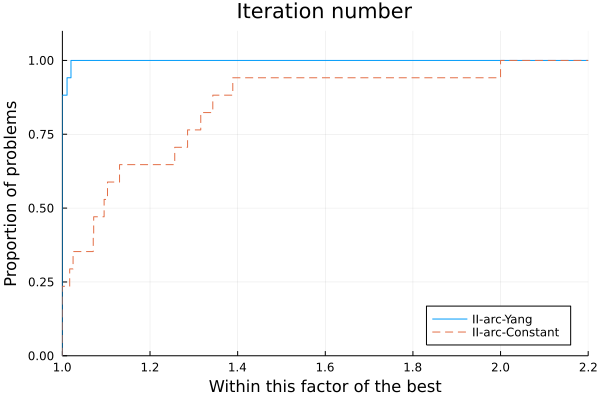}
  \caption{Performance profile of the number of iterations with II-arc-Yang and II-arc-Constant}
  \label{fig_comparison_iter_num_based_on_inital_point}
\end{figure}

Figure~\ref{fig_comparison_calc_time_based_on_inital_point} shows a performance profile of the computation time for II-arc-Yang and II-arc-Constant.
Even when accounting for the additional cost of solving linear systems to compute the initial point,
II-arc-Yang remains superior in terms of overall computation time.

\begin{figure}[htb]
  \centering
  \includegraphics[width=0.7\textwidth]{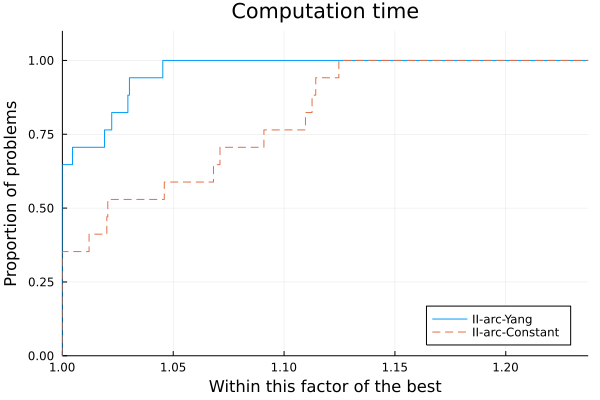}
  \caption{Performance profile of the computation time with II-arc-Yang and II-arc-Constant}
  \label{fig_comparison_calc_time_based_on_inital_point}
\end{figure}

Based on these observations, we adopt the initial point by \citet{Yang2017} in the remainder of this paper and conduct a comparison with the existing II-line-Yang method.

\subsubsection{Comparison with existing method}
As mentioned above, II-arc-Yang successfully solved 17 out of the 18 test problems,
whereas II-line-Yang solved 16 problems, failing on KEN-18 and OSA-60.

Firstly,
Figure~\ref{fig_comparison_iter_num_with_existing_method} shows a performance profile on the numbers of iterations of II-arc-Yang and II-line-Yang,
filtered by the problems solved by all methods.
We observe from Figure~\ref{fig_comparison_iter_num_with_existing_method} that
II-arc-Yang used fewer iterations than II-line-Yang in all the problems.
For nearly 25\% of the test problems,
II-line-Yang required twice or more iterations than II-arc-Yang.
Therefore,
these results indicate that the number of iterations can be reduced by approximating the central path with the ellipsoidal arc,
when the search directions are obtained inexactly.

\begin{figure}[htb]
  \centering
  \includegraphics[width=0.7\textwidth]{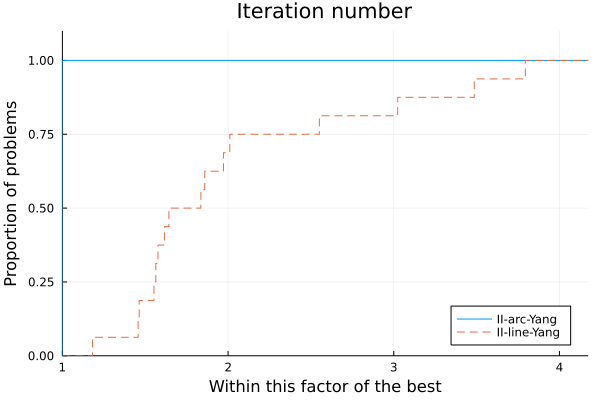}
  \caption{Performance profile of the number of iterations with II-arc-Yang and II-line-Yang}
  \label{fig_comparison_iter_num_with_existing_method}
\end{figure}

Figure~\ref{fig_comparison_calc_time_with_existing_method} provides a performance profile on the computation time.
We can observe from this figure that
II-arc-Yang used less computation time than II-line-Yang in 75\% of the problems.
Moreover,
the performance profiles indicate that II-arc-Yang attains a larger performance ratio for smaller values of the performance factor,
meaning that it solves a higher proportion of problems within a smaller factor of the best computation time than II-line-Yang.
This suggests that the reduction in the number of the iterations contributes to the overall computational efficiency of II-arc-Yang.

\begin{figure}[htb]
  \centering
  \includegraphics[width=0.7\textwidth]{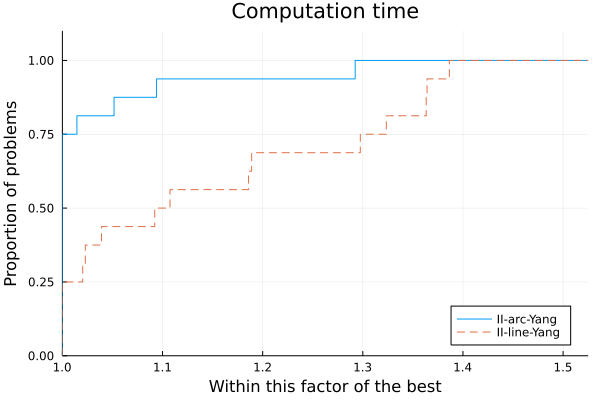}
  \caption{Performance profile of the computation time with II-arc-Yang and II-line-Yang}
  \label{fig_comparison_calc_time_with_existing_method}
\end{figure}

II-arc-Yang requires solving an additional system of linear equations~\eqref{inexact_second_derivave_NES} compared to II-line-Yang.
However,
doubling the number of systems at each iteration does not simply mean doubling the total computation time.
Figure~\ref{fig_comparison_calc_time_to_obtain_search_directions} shows the computation time to obtain the search directions
(the execution time of the preconditioned CG to solve \eqref{inexact_first_derivative_NES} and \eqref{inexact_second_derivave_NES} in II-arc-Yang and \eqref{inexact_first_derivative_NES} in II-line-Yang)
when solving PDS-10.
This figure shows that the computation time of the preconditioned CG increases in both II-arc-Yang and II-line-Yang as the iterations proceed,
since $\mu_k$ on the right-hand side in \eqref{def_upper_derivatives_residual_NES} is smaller in the later iterations.
The computation time of II-arc-Yang was reduced to a half after the 65th iteration,
since the calculation on $(\tilde{\ddot{x}}, \tilde{\ddot{y}}, \tilde{\ddot{s}})$ was skipped as described in Section~\ref{section_modification_second_order_derivative}.

\begin{figure}[htb]
  \centering
  \includegraphics[width=0.7\textwidth]{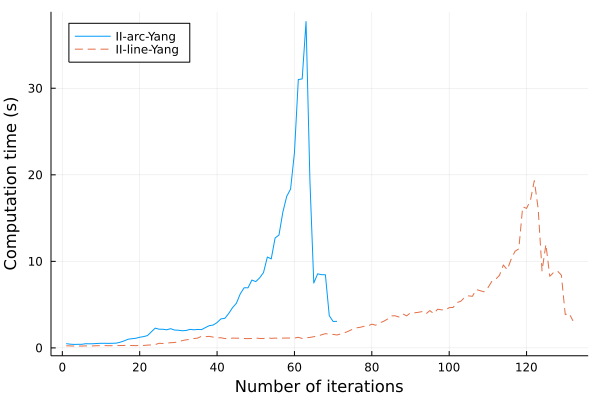}
  \caption{Computation time to obtain the search directions for solving PDS-10}
  \label{fig_comparison_calc_time_to_obtain_search_directions}
\end{figure}
\section{Conclusion}
\label{section_conclusion}
In this work,
we proposed an inexact infeasible arc-search interior-point method (II-arc) for solving LOPs.
In particular,
we showed that the proposed method achieves a smaller polynomial iteration complexity than II-line by a factor of $n^{0.5}$.
In the numerical experiments,
II-arc required fewer iterations than II-line,
since the ellipsoidal arc in II-arc can approximate the central path more adequately than the straight line used in II-line.
In addition,
this iteration reduction enabled II-arc-Constant to reduce the computational time.

Since the primary objective of this paper was to propose the inexact infeasible arc-search IPM and to discuss its convergence,
the numerical performance of the proposed method can be further improved.
For example, the method can be implemented with other programming languages (such as C++ and Julia).
In addition, inexact linear system solvers, including \cite{bartmeyer2021switching},
and their preconditioner should be investigated.
The sensitivity of the inexact linear system solvers can also be discussed, in particular,
in the case of ill-conditioned input matrix $A$.
From the viewpoint of the reduction in the number of iterations,
the combination with Nesterov's restarting strategy~\cite{iida2024infeasible} or
feasible IPMs~\cite{mohammadisiahroudi2025inexact,mohammadisiahroudi2018improvements} can be a future topic.

\appendix
\section{Appendix}

\subsection{Theoretical proof of the iteration complexity of II-line}
\label{section_proof_for_inexact_line}
In this section,
we present the theoretical proof of the iteration complexity of the II-line Algorithm~\ref{algorithm_II_line_IPM} described in Section~\ref{section_II_line}.
All notation and assumptions follow those in the main text.

The proof is carried out in a manner analogous to the analysis of the II-arc method presented in Section~\ref{section_theoretical_proof}.
Therefore, it is sufficient to establish the following proposition in order to conclude that the iteration complexity is $\order{n^2 L}$.
\begin{proposition}
  \label{proposition_lower_bound_of_step_size_line}
  Let $\{(x^k, y^k, s^k)\}$ be the sequence generated by Algorithm~\ref{algorithm_II_line_IPM}.
  Then,
  there exists $\hat{\alpha} > 0$ satisfying
  \eqref{conditions_G_g_h_no_less_than_0} for any $\alpha_k \in (0, \hat{\alpha}]$ and
  $$
    \sin(\hat{\alpha}) = \frac{C}{n^2},
  $$
  where $C$ is a positive constant.
\end{proposition}

Lemma~\ref{lemma_inexact_solution_MNES_conditions} also applies to II-line,
since its expressions are identical to those used in II-arc.

By the difference of the step size,
Lemma~\ref{lemma_decrease_constraint_residuals} changes for II-line as follows:
\begin{lemma}[{\cite[Lemma~7.2]{yang2020arc}}]
  \label{lemma_decrease_constraint_residuals_line}
  For each iteration $k$,
  the following relations hold.
  \begin{align*}
    r_b(x^{k+1})          & = r_b(x^k)\left(1 - \alpha_k \right),       \\
    r_c(y^{k+1}, s^{k+1}) & = r_c(y^k, s^k) \left(1 - \alpha_k \right).
  \end{align*}
\end{lemma}
This Lemma holds by Lemma~\ref{lemma_inexact_solution_MNES_conditions} and \eqref{def_variable_alpha_with_inexact_derivatives_line}.
For the following discussions,
we introduce the notation:
\begin{equation}
  \nu_k = \prod_{i=0}^{k-1} (1 - \alpha_i).
  \label{def_nu_line}
\end{equation}

Lemmas~\ref{lemma_upper_nu_x_s}, \ref{lemma_upper_derivatives_residual} and \ref{lemma_first_derivative_upper} also hold under the new definition of $\nu_k$,
in the same manner as in the proposed method.
Using these lemmas, we are ready to prove Proposition~\ref{proposition_lower_bound_of_step_size_line}.

\begin{proof}[Proof of Proposition~\ref{proposition_lower_bound_of_step_size_line}]
  We can obtain
  \begin{align}
    x^k(\alpha) \circ s^k(\alpha)
    = & \ \left(x^k-\alpha\tilde{\dot{x}}\right) \circ \left(s^k-\alpha\tilde{\dot{s}}\right) \notag                                                        \\
    = & \ x^k \circ s^k - \alpha \left(x^k \circ \tilde{\dot{s}} + \tilde{\dot{x}} \circ s^k\right) + \alpha^2 \tilde{\dot{x}} \circ \tilde{\dot{s}} \notag \\
    [\because \eqref{inexact_first_derivative_MNES_duality}] \quad =
      & \ x^k \circ s^k - \alpha (x^k \circ s^k - \sigma \mu_k e - S^k v_1^k) + \alpha^2 \tilde{\dot{x}} \circ \tilde{\dot{s}} \notag                       \\
    = & \ x^k \circ s^k (1 - \alpha) + \alpha \sigma \mu_k e + \alpha S^k v_1^k + \alpha^2 \left(\tilde{\dot{x}} \circ \tilde{\dot{s}}\right)
    \label{x_s_alpha_Hadamard_line}
  \end{align}
  and
  \begin{align}
    x^k(\alpha)^\top s^k(\alpha)
    = & \ \left(x^k- \alpha \tilde{\dot{x}}\right)^\top \left(s^k- \alpha \tilde{\dot{s}}\right) \notag \\
    [\because \eqref{x_s_alpha_Hadamard_line}, \eqref{def_mu}] \quad
    = & \ (x^k)^\top s^k \left((1 - \alpha) + \alpha \sigma \right)
    + \alpha^2 \tilde{\dot{x}}^\top \tilde{\dot{s}} + \alpha \sum_{i=1}^n [S^k v_1^k]_i.
    \label{x_s_alpha_inner_product_line}
  \end{align}

  From Lemma~\ref{lemma_first_derivative_upper} and the Cauchy-Schwartz inequality,
  we obtain \eqref{upper_product_of_dot_x_and_dot_s_element_wise}.

  We prove that the step size $\alpha$ satisfying $\hat{g}^k(\alpha) \ge 0$ is bounded away from zero.
  From \eqref{x_s_alpha_inner_product_line},
  \begin{align}
    x^k(\alpha)^\top s^k(\alpha)
    \ge & \ (x^k)^\top s^k \left((1 - \alpha) + \alpha \sigma \right)
    - \alpha^2 \abs{\tilde{\dot{x}}^\top \tilde{\dot{s}}} - \alpha \norm{S^k v_1^k}_1 \notag \\
    [\because \eqref{inequality_norms}, \eqref{upper_residual_term_MNES}] \quad
    \ge & \ (x^k)^\top s^k \left((1 - \alpha) + \alpha \sigma \right)
    - \alpha^2 \abs{\tilde{\dot{x}}^\top \tilde{\dot{s}}} - \alpha \eta n \mu_k.
    \label{lower_x_s_alpha_inner_product_line}
  \end{align}
  Therefore,
  \begin{align*}
    \hat{g}^k(\alpha) = & \ x^k(\alpha)^\top s^k(\alpha) - (1 - \alpha)(x^k)^\top s^k                                                \\
    [\because \eqref{lower_x_s_alpha_inner_product_line}] \quad
    \ge                 & \ \alpha \sigma (x^k)^\top s^k - \alpha \eta n \mu_k - \alpha^2 \abs{\tilde{\dot{x}}^\top \tilde{\dot{s}}} \\
    [\because \eqref{def_mu}, \eqref{upper_product_of_dot_x_and_dot_s_element_wise}] \quad
    \ge                 & \ \alpha n \mu_k \left((\sigma - \eta) - 2 \alpha C_2^2 n \right).
  \end{align*}
  Since $- \alpha C_2^2 n$ is monotonically decreasing by $\alpha$
  and $\sigma > \eta$ holds from \eqref{parameter_condition_for_G_i} and $\gamma_1 \in (0, 1)$,
  there exists the step size $\hat{\alpha}_1 \in (0, 1]$ satisfying the last formula of the right-hand side is no less than 0.
  When
  $$
    \hat{\alpha}_1 \le \frac{\sigma - \eta}{2 n C_2^2},
  $$
  from $0 < \sigma -\eta < \sigma \le 1$,
  \begin{equation*}
    (\sigma - \eta) - 2 C_2^2 n \hat{\alpha}_1 \ge 0.
  \end{equation*}
  Therefore,
  $\hat{g}^k(\alpha) \ge 0$ is satisfied for any $\alpha \in (0, \hat{\alpha}_1]$.

  Next, we consider the range of $\alpha$ such that $\hat{G}^k_i(\alpha) \ge 0$.
  From \eqref{upper_product_of_dot_x_and_dot_s_element_wise} and \eqref{upper_derivatives_element_wise_minus_products_dot},
  we have
  \begin{align*}
    \hat{G}_i^k(\alpha) = & \ x_i^k(\alpha) s_i^k(\alpha) - \gamma_1 \mu_k(\alpha)                                                                                               \\
    [\because \eqref{x_s_alpha_Hadamard_line}, \eqref{def_mu}, \eqref{x_s_alpha_inner_product_line}] \quad
    \ge                   & \ x^k_i s^k_i (1 - \alpha) + \alpha \sigma \mu_k
    + \alpha^2 \tilde{\dot{x}}_i \tilde{\dot{s}}_i - \alpha \norm{S^k v_1^k}_\infty                                                                                              \\
                          & - \frac{\gamma_1}{n} \left( n \mu_k (1 - \alpha + \alpha \sigma) + \alpha^2 \tilde{\dot{x}}^\top \tilde{\dot{s}} + \alpha \norm{S^k v_1^k}_1 \right) \\
    [\because \eqref{def_neighborhood}, \eqref{upper_residual_term_MNES}, \eqref{inequality_norms}] \quad
    \ge                   & \ \alpha (1 - \gamma_1) \sigma \mu_k - \alpha (1 + \gamma_1) \eta \mu_k
    + \alpha^2 \left(\tilde{\dot{x}}_i \tilde{\dot{s}}_i - \frac{\gamma_1}{n} \tilde{\dot{x}}^\top \tilde{\dot{s}}\right)                                                        \\
    [\because \eqref{upper_product_of_ddot_x_and_ddot_s_element_wise}, \eqref{upper_derivatives_element_wise_minus_products_dot}] \quad
    \ge                   & \ \alpha \mu_k \left( (1 - \gamma_1) \sigma - (1 + \gamma_1) \eta - 2 \alpha C_2^2 n^2 \right).
  \end{align*}
  We can derive the same discussion as $\hat{g}^k(\alpha)$ using \eqref{parameter_condition_for_G_i}.
  When
  $$
    \hat{\alpha}_2 \le \frac{(1 - \gamma_1) \sigma - (1 + \gamma_1) \eta}{2 n^2 C_2^2},
  $$
  from $0 < (1 - \gamma_1) \sigma - (1 + \gamma_1) \eta < \sigma \le 1$,
  \begin{equation*}
    (1 - \gamma_1) \sigma - (1 + \gamma_1) \eta - 2 \alpha C_2^2 n^2 \ge 0.
  \end{equation*}
  Therefore,
  $\hat{G}_i^k(\alpha) \ge 0$ is satisfied for $\alpha \in (0, \hat{\alpha}_2]$.

  Lastly, we consider $\hat{h}^k(\alpha) \ge 0$.
  Similarly to the derivation of \eqref{lower_x_s_alpha_inner_product_line},
  we can obtain the following:
  \begin{equation}
    x^k(\alpha)^\top s^k(\alpha)
    \le (x^k)^\top s^k \left((1 - \alpha) + \alpha \sigma \right) + \alpha^2 \abs{\tilde{\dot{x}}^\top \tilde{\dot{s}}} + \alpha \eta n \mu_k,
    \label{upper_x_s_alpha_inner_product_line}
  \end{equation}
  Therefore,
  \begin{align*}
    \hat{h}^k(\alpha) = & \ \left(1 - \alpha (1-\beta) \right)(x^k)^\top s^k - x^k(\alpha)^\top s^k(\alpha) \\
    [\because \eqref{upper_x_s_alpha_inner_product_line}] \quad
    \ge                 & \ (x^k)^\top s^k \left(\alpha \beta - \alpha \sigma \right)
    - \alpha^2 \abs{\tilde{\dot{x}}^\top \tilde{\dot{s}}} - \alpha \eta n \mu_k                             \\
    [\because \eqref{def_mu}, \eqref{upper_product_of_dot_x_and_dot_s_element_wise}] \quad
    \ge                 & \ \alpha n \mu_k \left((\beta - \sigma - \eta) - \alpha C_2^2 n\right)
  \end{align*}
  $\left((\beta - \sigma - \eta) - \alpha C_2^2 n\right)$ is monotonically decreasing for $\alpha$.
  Therefore,
  it is possible to take a step size $\hat{\alpha}_3$ satisfying $\hat{h}^k(\hat{\alpha}_3) \ge 0$ from \eqref{parameter_condition_beta_more_than_sigma_plus_eta}.
  When
  $$
    \hat{\alpha}_3 \le \frac{\beta - \sigma - \eta}{n C_2^2 },
  $$
  from $0 < \beta - \sigma -\eta < \beta < 1$, we know
  \begin{equation*}
    (\beta - \sigma - \eta) - \hat{\alpha}_3 C_2^2 n \ge 0.
  \end{equation*}
  Therefore, $\hat{h}^k(\alpha) \ge 0$ is satisfied for $\alpha \in (0, \hat{\alpha}_3]$.

  From the above discussions,
  when $\hat{\alpha}$ is taken such that
  \begin{equation*}
    \sin(\hat{\alpha}) = \frac{1}{n^2} \frac{
      \min \left\{(1 - \gamma_1) \sigma - (1 + \gamma_1) \eta, \beta - \sigma - \eta\right\}
    }{2 C_2^2},
  \end{equation*}
  $\hat{g}^k(\alpha), \hat{G}_i^k(\alpha), \hat{h}^k(\alpha) \ge 0$ are satisfied for all
  $k$ and $\alpha \in (0, \hat{\alpha}]$.
\end{proof}

\subsection{Numerical experiments using a state-of-the-art solver}
\label{section_appendix_SOTA_results}
We additionally report numerical results obtained using a state-of-the-art commercial solver, CPLEX.
All experiments in this subsection were conducted with CPLEX version 22.1.1.
The number of parallel threads was set to one,
and the stopping criteria were chosen to be identical to those described in Section~\ref{section_stopping_criteria}.
All other parameters were kept at their default values.

The computational results are summarized in Table~\ref{table_results_by_cplex}.
Compared with II-arc and II-line,
CPLEX achieves significantly fewer iterations and less computation times across all tested instances.

\begin{table}[htb]
  \centering
  \caption{CPLEX results: number of IPM iterations and computation time in seconds}
  \label{table_results_by_cplex}
  \begin{tabular}{l|rr}
    \hline
    problem  &
    \multicolumn{2}{c}{CPLEX} \\
             & Itr. & Time    \\
    \hline \hline
    CRE-A    & 26   & 1.25    \\
    CRE-B    & 40   & 5.24    \\
    CRE-C    & 29   & 0.95    \\
    CRE-D    & 38   & 4.05    \\
    DFL001   & 25   & 4.99    \\
    KEN-07   & 15   & 1.01    \\
    KEN-11   & 18   & 6.78    \\
    KEN-13   & 25   & 15.43   \\
    KEN-18   & 28   & 53.43   \\
    OSA-07   & 20   & 3.74    \\
    OSA-14   & 30   & 8.31    \\
    OSA-30   & 28   & 15.76   \\
    OSA-60   & 35   & 34.97   \\
    PDS-06   & 27   & 6.04    \\
    PDS-10   & 27   & 11.59   \\
    PDS-20   & 33   & 30.34   \\
    QAP15    & 16   & 7.99    \\
    STOCFOR3 & 24   & 5.86    \\
    \hline
  \end{tabular}
\end{table}

It is worth noting that CPLEX internally computes search directions by solving linear systems based on direct factorization (high-accuracy) solver.
As a result,
the number of iterations is typically smaller than that of methods based on inexact search directions.

Moreover, the reduced computation time of CPLEX is largely attributable to
a variety of sophisticated heuristics implemented in the solver.
These include, for example, advanced strategies for accelerating Cholesky factorizations,
such as ordering techniques, fill-in reduction,
and cache-aware implementations~\cite{lustig1996gigaflops,klotz2013practical}.
In addition, once a sufficiently accurate IPM solution is obtained,
CPLEX may terminate the IPM phase early and switch to the simplex method to compute a high-accuracy optimal solution.
The numbers of iterations reported in Table~\ref{table_results_by_cplex} correspond only to the IPM phase.

While these heuristics lead to excellent practical performance,
they also indicate that CPLEX represents a highly optimized solver
that integrates algorithmic, numerical, and implementation-level techniques developed over several decades.
In contrast, the objective of this paper is not to compete directly with such mature solvers,
but to investigate how arc-search strategies can improve and efficiency of II-IPMs.

Nevertheless, the comparison suggests that incorporating similar heuristic ideas
into the proposed II-arc framework may further improve its practical performance.
Such extensions are beyond the scope of this paper and constitute an important topic for future research.

\bibliographystyle{abbrvnat}
\bibliography{scholar}

\end{document}